\documentclass[a4paper,12pt]{article}
\usepackage{amsfonts}
\usepackage{mathrsfs}
\usepackage{algorithm, algpseudocode}
\usepackage{amsmath,amssymb,amsthm,latexsym,amsfonts}
\usepackage{pstricks}
\usepackage{enumerate}

\usepackage{graphicx}
\usepackage{color}
\usepackage{ifpdf}

\usepackage{caption}

\topmargin -12mm \pagestyle{plain} \headheight 5mm \oddsidemargin
2mm \textwidth 155mm \textheight 230mm
\parskip 0.2cm

\begin{document}

\newtheorem{lem}{Lemma}
\newtheorem{thm}{Theorem}
\newtheorem{cor}{Corollary}
\newtheorem{exa}{Example}
\newtheorem{con}{Conjecture}
\newtheorem{rem}{Remark}
\newtheorem{obs}{Observation}
\newtheorem{definition}{Definition}
\newtheorem{pro}{Proposition}
\theoremstyle{plain}
\newcommand{\D}{\displaystyle}
\newcommand{\DF}[2]{\D\frac{#1}{#2}}

\renewcommand{\figurename}{{\bf Fig}}
\captionsetup{labelfont=bf}

\title{The $k$-proper index of complete bipartite and complete multipartite
graphs\footnote{Supported by NSFC No.11371205 and 11531011, and PCSIRT.}}
\author{\small Wenjing Li, Xueliang Li, Jingshu Zhang\\
      {\it\small Center for Combinatorics and LPMC}\\
      {\it\small Nankai University, Tianjin 300071, China}\\
       {\it\small liwenjing610@mail.nankai.edu.cn; lxl@nankai.edu.cn; jszhang@mail.nankai.edu.cn}}
\date{}
\maketitle

\begin{abstract}
Let $G$ be an edge-colored graph. A tree $T$ in $G$ is a \emph{proper tree} if no two adjacent edges of it are assigned the same color. Let $k$ be a fixed integer with $2\leq k\leq n$.
For a vertex subset $S\subseteq V(G)$ with $|S|\geq 2$, a tree is called an \emph{$S$-tree} if it connects $S$ in $G$ . A \emph{$k$-proper coloring} of $G$ is an edge-coloring of $G$ having the property that
for every set $S$ of $k$ vertices of $G$, there exists a proper $S$-tree $T$ in $G$.
The minimum number of colors that are required in a $k$-proper coloring of $G$
is defined as the \emph{$k$-proper index} of $G$, denoted by $px_k(G)$. In this paper, we determine the 3-proper index of all complete bipartite and complete multipartite graphs and partially determine the $k$-proper index of them for $k\geq 4$.\\[2mm]
\noindent{\bf Keywords:} 3-proper index, color code, binary system, complete bipartite and multipartite graphs, $k$-proper index.

\noindent{\bf AMS subject classification 2010:} 05C15, 05C35, 05C38,
05C40.
\end{abstract}

\section{Introduction}
All graphs considered in this paper are simple, finite, undirected and connected.
We follow the terminology and notation of Bondy and Murty in~\cite{Bondy} for those not
defined here. Let $G$ be a graph, we use $V(G), E(G),|G|,\Delta(G)$ and $\delta(G)$ to denote the vertex set, edge set, order (number of vertices), maximum degree and minimum degree of $G$, respectively. For $D\subseteq V(G)$, let $\overline{D}=V(G)\backslash D$, and let $G[D]$ denote the subgraph of $G$ induced by $D$.

Let $G$ be a nontrivial connected graph with an \emph{edge-coloring c}
$:E(G)\rightarrow \{1,\dots,t\}$, $t\in \mathbb{N}$, where adjacent edges may
be colored with the same color. If adjacent edges of $G$ receive different colors by $c$, then $c$ is called a \emph{proper coloring}. The minimum number of colors required in a proper coloring of $G$ is referred as the \emph{chromatic index} of $G$ and denoted by $\chi'(G)$. Meanwhile, a path in $G$ is called \emph{a rainbow path} if no two edges
of the path are colored with the same color. The graph $G$ is called \emph{rainbow connected}
if for any two distinct vertices of $G$, there is a rainbow path connecting them.
For a connected graph $G$, the \emph{rainbow connection number} of $G$, denoted by $rc(G)$, is defined as the
minimum number of colors that are required to make $G$ rainbow connected.
These concepts
were first introduced by Chartrand et al. in~\cite{Char1} and have been well-studied since then.
For further details, we refer the reader to a book~\cite{Li}.


Motivated by rainbow coloring and proper coloring in graphs, Andrews et al.~\cite{Andrews} and, independently, Borozan et al.~\cite{Magnant} introduced the concept of proper-path coloring.
Let $G$ be a nontrivial connected graph with an edge-coloring.
A path in $G$ is called \emph{a proper path} if no two adjacent edges of the path are
colored with the same color. The graph $G$ is called \emph{proper connected}
if for any two distinct vertices of $G$,
there is a proper path connecting them.
The \emph{proper connection number} of $G$,
denoted by $pc(G)$, is defined as the minimum number of colors that are required to make $G$ proper connected.
For more details, we refer to a dynamic survey~\cite{Li1}.

Chen et al.~\cite{Chen1} recently generalized the concept of proper-path to proper tree.
A tree $T$ in an edge-colored graph is a \emph{proper tree} if no two adjacent edges of it are assigned the same color. For a vertex subset $S\subseteq V(G)$,
a tree is called an \emph{$S$-tree} if it connects $S$ in $G$.
Let $G$ be a connected graph of order $n$ with an edge-coloring and let $k$ be a fixed integer with $2\leq k\leq n$. A \emph{$k$-proper coloring} of $G$ is an edge-coloring of $G$ having the property that for every set $S$ of $k$ vertices of $G$, there exists a proper $S$-tree $T$ in $G$.
The minimum number of colors that are required in a $k$-proper coloring of $G$
is the \emph{$k$-proper index} of $G$, denoted by $px_k(G)$. Clearly, $px_2(G)$ is precisely the proper connection number $pc(G)$ of $G$.
For a connected graph $G$, it is easy to see that
$px_2(G)\leq px_3(G)\leq\dots\leq px_n(G)$. The following results are not difficult to get.

\begin{pro}\cite{Chen1}\label{pro1}
If $G$ is a nontrivial connected graph of order $n\geq 3$, and $H$ is a connected spanning subgraph of $G$, then $px_k(G)\leq px_k(H)$ for any $k$ with $3\leq k\leq n$. In particular, $px_k(G)\leq px_k(T)$ for every spanning tree $T$ of $G$.
\end{pro}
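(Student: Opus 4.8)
The plan is to start from an optimal $k$-proper coloring of the smaller graph $H$ and transport it up to $G$ without introducing any new colors. Set $t=px_k(H)$ and fix a $k$-proper coloring $c_H:E(H)\to\{1,\dots,t\}$ witnessing this value. I would then define an edge-coloring $c_G$ of $G$ by retaining the color $c_H(e)$ on every edge $e\in E(H)$, and assigning an arbitrary already-used color (say color $1$) to each edge of $E(G)\setminus E(H)$. Since $H$ is a spanning subgraph we have $V(H)=V(G)$ and $E(H)\subseteq E(G)$, so $c_G$ is a well-defined edge-coloring of $G$ that uses exactly the $t$ colors occurring in $c_H$.

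Next I would verify that $c_G$ is indeed a $k$-proper coloring of $G$. Let $S\subseteq V(G)$ be any set of $k$ vertices. Because $V(H)=V(G)$, the set $S$ is also a $k$-subset of $V(H)$, so by the choice of $c_H$ there is a proper $S$-tree $T$ contained in $H$. The key observation is that every edge of $T$ lies in $E(H)$ and therefore keeps its original color under $c_G$; as the defining property of a proper tree depends only on the colors of the edges of the tree itself, $T$ remains a proper tree when regarded inside $G$, and it still connects $S$. Hence $T$ is a proper $S$-tree in $G$, so $c_G$ satisfies the requirement for this $S$. As $S$ was arbitrary, $c_G$ is a $k$-proper coloring of $G$ using $t$ colors, which gives $px_k(G)\le t=px_k(H)$.

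The \emph{in particular} clause then follows at once: every spanning tree $T$ of $G$ is a connected spanning subgraph of $G$, so applying the inequality just proved with $H=T$ yields $px_k(G)\le px_k(T)$.

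As for the main obstacle, there is essentially no difficult step in this argument; the only point demanding care is the observation that properness of a tree is an intrinsic property of its own edge-colors, so enlarging the ambient graph by extra edges (colored arbitrarily) cannot spoil a tree that was already proper. One should also confirm that the extension does not inadvertently enlarge the palette, which is ensured by reusing an existing color on the new edges rather than a fresh one.
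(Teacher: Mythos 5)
Your argument is correct and is the standard proof of this monotonicity property: extend an optimal $k$-proper coloring of $H$ to $G$ by reusing an existing color on the edges of $E(G)\setminus E(H)$, and observe that any proper $S$-tree in $H$ survives unchanged in $G$. The paper itself cites this proposition from the reference without reproducing a proof, but your reasoning matches the intended (and essentially only) argument, including the careful points about not enlarging the palette and properness being intrinsic to the tree's own edges.
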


\begin{pro}\cite{Chen1}\label{pro2}
For an arbitrary connected graph $G$ with order $n\geq 3$, we have $px_k(G)\geq 2$ for any integer $k$ with $3\leq k\leq n$.
\end{pro}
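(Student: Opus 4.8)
The plan is to argue by contradiction, showing that a single color can never suffice once $k\geq 3$. Suppose instead that $px_k(G)=1$, so that some edge-coloring $c$ using exactly one color realizes the $k$-proper property. Since $n\geq k$, I can fix an arbitrary vertex set $S\subseteq V(G)$ with $|S|=k$. By the definition of a $k$-proper coloring, there must exist a proper $S$-tree $T$ in $G$, and it is this tree that I will show cannot exist under a monochromatic coloring.

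The key step is to analyze the structure of $T$. Because $T$ connects $S$ and $|S|=k\geq 3$, we have $|V(T)|\geq 3$, so $T$ is a tree on at least three vertices. I would then invoke the elementary fact that every tree on at least three vertices contains a vertex of degree at least $2$: such a tree has at least two leaves, and any path joining two of its leaves must pass through an internal vertex, which necessarily has degree $\geq 2$. Let $v$ be such a vertex of $T$, and let $e_1,e_2$ be two edges of $T$ incident with $v$.

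The contradiction is now immediate. Since $c$ uses only one color, both $e_1$ and $e_2$ receive that same color; yet $e_1$ and $e_2$ are adjacent, as they share the vertex $v$. Thus $T$ contains two adjacent edges of the same color, contradicting the assumption that $T$ is a proper tree. Hence a single color never suffices, and therefore $px_k(G)\geq 2$ for every $k$ with $3\leq k\leq n$.

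I do not expect any genuine obstacle here; the only point requiring a moment's care is the observation that a tree spanning $k\geq 3$ vertices must have a branching or internal vertex of degree at least $2$, which is precisely what forces two adjacent edges to collide in color under a one-color scheme. Once this is noted, the bound follows directly from the definition of the $k$-proper index.
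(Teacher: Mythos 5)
Your proof is correct. The paper itself does not supply an argument for this proposition---it is quoted from the reference of Chen, Li and Liu with the remark that such results ``are not difficult to get''---and your argument is exactly the standard one: with a single color, any $S$-tree on $|S|=k\geq 3$ vertices has at least two edges meeting at a common vertex, so it cannot be proper, forcing $px_k(G)\geq 2$. The only step needing care, that a tree on at least three vertices has a vertex of degree at least two, is handled correctly.
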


A \emph{Hamiltonian path} in a graph $G$ is a path containing every vertex of $G$ and a graph having a Hamiltonian path is a \emph{traceable graph}.

\begin{pro}\cite{Chen1}\label{pro3}
If $G$ is a traceable graph with $n\geq 3$ vertices, then $px_k(G)=2$ for each integer $k$ with $3\leq k\leq n$.
\end{pro}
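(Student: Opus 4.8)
The plan is to sandwich $px_k(G)$ between $2$ and $2$. The lower bound $px_k(G)\geq 2$ is immediate from Proposition~\ref{pro2}, which applies to every connected graph of order $n\geq 3$ and every $k$ with $3\leq k\leq n$. So the entire content of the argument lies in producing a $k$-proper coloring that uses only two colors.

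For the upper bound I would exploit the hypothesis directly: a traceable graph carries a Hamiltonian path $P=v_1v_2\cdots v_n$, which is a spanning path and hence a spanning tree of $G$. By Proposition~\ref{pro1} we then have $px_k(G)\leq px_k(P)$, so it suffices to $2$-color the path $P$ so that every $k$-subset of its vertices admits a proper $S$-tree.

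The coloring I would use is the alternating one: assign color $1$ to the edges $v_1v_2, v_3v_4,\ldots$ and color $2$ to the edges $v_2v_3, v_4v_5,\ldots$; equivalently, the edge $v_iv_{i+1}$ receives color $1$ if $i$ is odd and color $2$ if $i$ is even. Since consecutive edges then always differ in color, the path $P$ under this coloring has the property that \emph{every} subpath is a proper path. Now fix any $S\subseteq V(P)$ with $|S|=k$, and let $v_a$ and $v_b$ (with $a<b$) be the vertices of $S$ of smallest and largest index along $P$. Because $P$ is itself a tree, the unique minimal subtree of $P$ connecting $S$ is the subpath $v_a v_{a+1}\cdots v_b$, which contains every vertex of $S$; by the observation above this subpath is proper, so it serves as a proper $S$-tree.

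This shows $px_k(P)\leq 2$ for every $k$ with $3\leq k\leq n$, and combining $px_k(G)\leq px_k(P)$ with the lower bound yields $px_k(G)=2$. I expect no real obstacle here: the only point requiring a moment's care is recognizing that within a path the minimal $S$-tree is forced to be a contiguous subpath, after which the verification collapses to the trivial fact that subpaths of an alternately colored path are proper.
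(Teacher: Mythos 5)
Your proof is correct: the alternating $2$-coloring of a Hamiltonian path makes every subpath proper, so together with Propositions~\ref{pro1} and~\ref{pro2} it pins $px_k(G)$ at $2$. The paper itself states this proposition without proof (it is quoted from~\cite{Chen1}), and your argument is exactly the standard one behind it, so there is nothing to flag.
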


Armed with Proposition~\ref{pro3}, we can easily get
$px_k(K_n)=px_k(P_n)=px_k(C_n)=px_k(W_n)=px_k(K_{s,s})=2$ for each integer $k$ with $3\leq k\leq n$, where $K_n$, $P_n$, $C_n$ and $W_n$ are respectively a complete graph, a path, a cycle and a wheel on $n\geq 3$ vertices and $K_{s,s}$ is a regular complete bipartite graph with $s\geq 2$.

A vertex set $D\subseteq G$ is called an \emph{$s$-dominating set} of $G$ if every vertex in $\overline{D}$ is adjacent to at least $s$  distinct vertices of $D$.
If, in addition, $G[D]$ is connected, then we call $D$
a \emph{connected $s$-dominating set}.
Recently, Chang et al.~\cite{H.} gave an upper bound for the 3-proper index of graphs with respect to the connected 3-dominating set.

\begin{thm}\cite{H.}\label{thm1}
If $D$ is a connected 3-dominating set of a connected graph $G$ with minimum degree $\delta(G)\geq 3$, then $px_3(G)\leq px_3(G[D])+1$.
\end{thm}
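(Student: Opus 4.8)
The plan is to take an optimal $3$-proper coloring of $G[D]$ and spend the single extra color on every edge of $G$ that is not contained in $D$. Concretely, let $t=px_3(G[D])$ and fix a $3$-proper coloring $c$ of $G[D]$ using colors $\{1,\dots,t\}$; recall $t\ge 2$. Introduce a fresh color $t+1$ and color every edge of $G$ with at least one endpoint in $\overline D$ (both the edges joining $\overline D$ to $D$ and the edges inside $\overline D$) with color $t+1$. This uses exactly $t+1=px_3(G[D])+1$ colors, so it remains only to check that the coloring is $3$-proper. If $\overline D=\emptyset$ then $G=G[D]$ and there is nothing to prove, so assume $\overline D\neq\emptyset$; since every vertex of $\overline D$ has at least three neighbors in $D$, this forces $|D|\ge 3$, so $px_3(G[D])$ is well defined.

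Two facts about $c$ will be used repeatedly. First, for any three distinct vertices of $D$ there is a proper tree in $G[D]$ connecting them. Second, for any two distinct $a,b\in D$ there is a proper $a$–$b$ path in $G[D]$: pick a third $w\in D$, take a proper $\{a,b,w\}$-tree, and note that the $a$–$b$ path inside it is proper since adjacent edges of a subpath are adjacent in the tree. The decisive feature of the new color is that it never occurs in $G[D]$: hanging a pendant edge of color $t+1$ onto a proper tree that lives in $G[D]$ cannot clash with any tree edge meeting it, so the enlarged object is again a proper tree provided no two such pendant edges share an endpoint.

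I would then verify the $3$-proper property by cases on $m=|S\cap\overline D|$ for a given $3$-set $S$. If $m=0$ the tree supplied by $c$ works verbatim. For $m\ge 1$ the recipe is uniform: route each $u\in S\cap\overline D$ into $D$ through one neighbor $v^u\in N(u)\cap D$ (the edge $uv^u$ has color $t+1$), join the chosen landing vertices to the vertices of $S\cap D$ by a proper $G[D]$-tree on these distinct vertices, and attach the $u$'s as leaves. For $m=1$, $S=\{x,y,z\}$ with $x\in\overline D$, choose $v^x\in (N(x)\cap D)\setminus\{y,z\}$ (possible since $|N(x)\cap D|\ge 3$) and attach $x$ to a proper $\{v^x,y,z\}$-tree. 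For $m=2$, say $x\in D$ and $y,z\in\overline D$, the sets $(N(y)\cap D)\setminus\{x\}$ and $(N(z)\cap D)\setminus\{x\}$ each have size at least two, hence admit distinct representatives $v^y\neq v^z$, both different from $x$; one then builds a proper $\{x,v^y,v^z\}$-tree and attaches $y,z$. For $m=3$ the three neighbor-sets, each of size at least three, admit a system of distinct representatives $v^x,v^y,v^z$, onto whose proper $G[D]$-tree the three pendants are attached. In every case the constructed object is a tree, and properness is immediate from the decisive feature of the previous paragraph.

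The main obstacle, and essentially the only thing to get right, is the distinctness of the landing vertices: if two pendant edges of the common color $t+1$ were incident to the same vertex of $D$, the attachment would create an adjacent monochromatic pair. Avoiding this is exactly what the three-neighbor property buys us, via the elementary system-of-distinct-representatives argument above (Hall's condition is satisfied trivially, since each relevant neighbor-set already has enough room after deleting at most two forbidden vertices). I would note that the argument in fact only invokes the $3$-domination property — that each vertex of $\overline D$ has three neighbors in $D$ — so all of the substance lies in organizing this case analysis and checking that the single reused color $t+1$ can never clash inside any of the constructed trees.
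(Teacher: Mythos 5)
Your proposal is correct and follows essentially the same approach as the paper: Theorem~1 is quoted from~\cite{H.} without proof here, but the paper proves its generalization (Theorem~\ref{thm9}) by exactly your construction --- reserve one fresh color for all edges meeting $\overline{D}$, route each vertex of $S\cap\overline{D}$ to a distinct neighbor in $D$, and hang these pendant edges on a proper tree of $G[D]$. Your extra care in keeping the landing vertices disjoint from $S\cap D$ is slightly more than the paper does, but both arguments go through.
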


Using this, we can easily get the following. 

\begin{thm}\label{thm2}
For any complete bipartite graph $K_{s,t}$ with $t\geq s\geq3$, we have $2 \leq px_3(K_{s,t})\leq 3$.
\end{thm}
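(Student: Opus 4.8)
The plan is to get the lower bound for free from Proposition~\ref{pro2} and to obtain the upper bound by a single application of Theorem~\ref{thm1} to a well-chosen connected $3$-dominating set. For the lower bound, observe that $K_{s,t}$ with $t\geq s\geq 3$ is a connected graph of order $n=s+t\geq 6\geq 3$, so Proposition~\ref{pro2} immediately yields $px_3(K_{s,t})\geq 2$.

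For the upper bound I would apply Theorem~\ref{thm1}, which requires two things: that $\delta(G)\geq 3$, and that I exhibit a connected $3$-dominating set $D$ with $px_3(G[D])$ small. Write the bipartition as $X=\{x_1,\dots,x_s\}$ and $Y=\{y_1,\dots,y_t\}$. Since $t\geq s$, the minimum degree is $\delta(K_{s,t})=s\geq 3$, so the hypothesis of Theorem~\ref{thm1} holds. I would then take
$$D=X\cup\{y_1,\dots,y_s\},$$
so that the induced subgraph $G[D]$ is exactly the balanced complete bipartite graph $K_{s,s}$. This graph is Hamiltonian, hence traceable, and therefore $px_3(G[D])=px_3(K_{s,s})=2$ by Proposition~\ref{pro3}.

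It then remains to check that $D$ is indeed a connected $3$-dominating set. Connectivity is clear since $G[D]=K_{s,s}$ is connected. For the $3$-domination, every vertex of $\overline{D}=\{y_{s+1},\dots,y_t\}$ lies in the part $Y$ and is therefore adjacent to all $s$ vertices of $X\subseteq D$, giving it at least $s\geq 3$ neighbours in $D$ (and when $t=s$ the set $\overline{D}$ is empty and domination holds vacuously). Feeding this into Theorem~\ref{thm1} gives
$$px_3(K_{s,t})\leq px_3(G[D])+1=2+1=3,$$
which together with the lower bound completes the argument.

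The only real point of care is the choice of $D$: it must make $G[D]$ connected, $3$-dominating, \emph{and} of $3$-proper index at most $2$ simultaneously. The balanced choice $K_{s,s}$ is what delivers $px_3(G[D])=2$ through traceability; a smaller or unbalanced choice such as $K_{s,3}$ would no longer be traceable once $s\geq 5$, so one cannot invoke Proposition~\ref{pro3} there. I expect this balancing to be the main (though mild) obstacle, and it is resolved precisely because $t\geq s$ guarantees that $Y$ contains at least $s$ vertices to select.
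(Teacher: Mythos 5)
Your proof is correct and follows essentially the same route as the paper: lower bound from Proposition~\ref{pro2}, upper bound from Theorem~\ref{thm1} applied to a connected $3$-dominating set inducing a balanced complete bipartite graph whose $3$-proper index is $2$ by traceability. The only (immaterial) difference is your choice of $D$ inducing $K_{s,s}$, whereas the paper takes three vertices from each side so that $G[D]=K_{3,3}$.
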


\begin{proof}
Let $U$ and $W$ be the two partite sets of $K_{s,t}$, where $U=\{u_1,u_2,u_3,\dots,u_s\}$ and $W=\{w_1,w_2,w_3,\dots,w_t\}$. Obviously, $D=\{u_1,u_2,u_3,w_1, w_2,w_3\}$ is a connected 3-dominating set of $K_{s,t}$ and $\delta(K_{s,t})\geq 3$. It follows from Theorem~\ref{thm1} that $px_3(K_{s,t})\leq px_3(G[D])+1=3$.
By Proposition~\ref{pro2}, we have $px_3(K_{s,t})\geq 2$.
\end{proof}

Naturally, we wonder among them, whose 3-proper index is~$2$. Moreover, what are the exact values of $px_3(K_{s,t})$ with $s+t\geq 3, t\geq s\geq 1$ and $px_3(K_{n_1,n_2,\dots,n_r})$ with $r\geq3$?
Moreover, what happens when $k\geq 4$?
So our paper is organised as follows: In Section 2, we concentrate on all complete bipartite graphs and determine the value of the 3-proper index of each of them. In Section 3, we go on investigating all complete multipartite graphs and obtain the 3-proper index of each of them. In the final section, we turn to the case that $k\geq 4$, and give a partial answer. In the sequel, we use $c(uw)$ to denote the color of the edge $uw$.

\section{The $3$-proper index of a complete bipartite graph}
In this section, we concentrate on all complete bipartite graphs $K_{s,t}$ with $s+t\geq 3,t\geq s\geq 1$ and get a complete answer of the value of $px_3(K_{s,t})$. From~\cite{Chen1}, we know $px_3(K_{1,t})=t$.
Hence, in the following we assume that $t\geq s\geq 2$. Our result will be divided into three
separate theorems depending upon the value of $s$.


\begin{thm}\label{thm3}
For any integer $t\geq 2$, we have
\begin{displaymath}
px_3(K_{2,t}) = \left\{\begin{array}{ll}
2  & \text{if $2\leq t\leq 4$;}\\
3  & \text{if $5\leq t\leq 18$;}\\
\lceil\sqrt{\frac{t}{2}}\rceil  & \text{if $t\geq 19$.}
\end{array}\right.
\end{displaymath}
\end{thm}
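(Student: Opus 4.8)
The plan is to work entirely with the \emph{color code} of each vertex in the large part. Write $U=\{u_1,u_2\}$ and $W=\{w_1,\dots,w_t\}$, and to a coloring $c$ with $k$ colors associate to each $w_i$ the ordered pair $\mathrm{code}(w_i)=(c(u_1w_i),c(u_2w_i))\in\{1,\dots,k\}^2$. The first observation, which drives the whole lower bound, is that three vertices of $W$ sharing one code can never be joined by a proper tree: each such vertex is adjacent only to $u_1,u_2$, so in any $S$-tree its unique incident edge has color $c(u_1w_i)$ or $c(u_2w_i)$; among three vertices with identical code, two of these edges meet at a common hub $u_1$ or $u_2$ and carry the same color, a contradiction. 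Hence every code is used at most twice, so $t\le 2k^2$, and therefore $px_3(K_{2,t})\ge\lceil\sqrt{t/2}\,\rceil$. For $t\ge 19$ this already gives $px_3\ge 4=\lceil\sqrt{t/2}\,\rceil$, and combined with Proposition~\ref{pro2} it gives $px_3\ge2$ in all cases.

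Next I would pin down the two small thresholds from below. With only two colors there are just four codes and no star at $u_1$ or $u_2$ can be proper (a star needs three distinct colors on its edges), so every triple of $W$-vertices must be connected by a path of the shape $w_a-u_x-w_b-u_y-w_c$ through a \emph{mixed} middle vertex $w_b$, one with $c(u_1w_b)\ne c(u_2w_b)$. A short case check shows that as soon as any code is repeated, or the four codes are not all distinct, some triple forces a Steiner vertex whose existence would require a third color; hence two colors succeed only for $t\le4$, giving $px_3(K_{2,t})\ge3$ for all $t\ge5$.

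For the upper bounds I would exhibit explicit code assignments and then verify that every $3$-set $S$ has a proper $S$-tree. For $t\le4$ take the four codes $(1,1),(2,2),(1,2),(2,1)$ once each; for $5\le t\le18$ use three colors and each of the nine codes at most twice (so $t=18$ is the extremal case to check); and for $t\ge19$, with $k=\lceil\sqrt{t/2}\,\rceil\ge4$, note $2k^2\ge t$ and assign codes so that each is used at most twice. In each construction the verification splits according to the type of $S$. Triples meeting $U$ are easy: $\{u_1,u_2,w_i\}$ is handled by the path $u_1-w_i-u_2$ when $w_i$ is mixed and otherwise by routing through any mixed $w_j$, while $\{u_1,w_i,w_j\}$ is a star at $u_1$ unless $c(u_1w_i)=c(u_1w_j)$, in which case one detours through $u_2$. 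The substantial case is $S\subseteq W$: if the three first (or second) coordinates are distinct a star works; otherwise one produces a path through a mixed member of $S$, and in the few remaining configurations---most notably two equal pure codes together with a third code---one uses a Steiner vertex $w_\ell$ outside $S$ whose code supplies the missing color, which is exactly what the presence of a third (and, for $k\ge4$, further) color guarantees.

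The main obstacle is this last verification for $S\subseteq W$: showing that the ``each code at most twice'' assignments really do leave no unconnectable triple. The delicate points are the boundary construction with $k=3$, $t=18$, where all nine codes are saturated and the needed Steiner bridges must be located among the available vertices, and the sharp two-color analysis explaining why the naive bound $2k^2=8$ collapses to $t\le4$. Once a uniform rule for choosing the Steiner vertex $w_\ell$ (as a function of the codes appearing in $S$) is fixed and checked against all coordinate-collision patterns, the three ranges of the formula follow by matching these constructions with the pigeonhole lower bound.
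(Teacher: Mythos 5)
Your proposal follows essentially the same route as the paper's proof: color codes on $W$, the pigeonhole observation that no code may appear three times (giving $t\le 2k^2$ and hence the lower bound $\lceil\sqrt{t/2}\rceil$), a separate two-color impossibility argument for $t\ge 5$, and matching upper-bound constructions that use each code at most twice and call on a Steiner vertex in $W$ to supply a third color. The verifications you defer are exactly the case analyses in the paper's Claims 1--3 and they go through as you anticipate; in particular the worry about the saturated $k=3$, $t=18$ construction is harmless, since the Steiner vertex can be chosen with first coordinate different from both first coordinates occurring in $S$ (this needs $k\ge 3$) and hence automatically lies outside $S$, its existence being guaranteed because a code repeats only when all $k^2$ codes already appear.
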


{\noindent\bf Proof.}
Let $U,W$ be the two partite sets of $K_{2,t}$, where $U=\{u_1,u_2\}$ and $W=\{w_1,$ $w_2,\dots,w_t\}$. Suppose that there exists a 3-proper coloring $c:E(K_{2,t})\rightarrow \{1,2,\dots,k\}$,\\$k\in\mathbb{N}$.
Corresponding to the 3-proper coloring, there is a color code($w$) assigned to every vertex $w\in W$, consisting of an ordered 2-tuple $(a_1,a_2)$, where $a_i=c(u_iw)\in\{1,2,\dots,k\}$ for $i=1,2.$ In turn, if we give each vertex of $W$ a code, then we can induce the corresponding edge-coloring of $G$.

{\bf Claim 1:} $px_3(K_{2,t})=2$ if $2\leq t\leq4$.

\begin{proof} Give the codes $(1,2),(2,1),(1,1),(2,2)$ to $w_1,w_2,w_3,w_4$ (if there is).
Then it is easy to check that for every 3-subset $S$ of $K_{2,t}$, the edge-colored $K_{2,t}$ has a proper path $P$ connecting $S$.
\end{proof}

{\bf Claim 2:} $px_3(K_{2,t})>2$ if $t>4$.
\begin{proof}
Otherwise, give $K_{2,t}$ a 3-proper coloring with colors 1 and 2.
Then for any 3-subset $S$ of $K_{2,t}$, any proper tree connecting $S$ must be a path, actually.
For $t>4$, there are at least two vertices $w_p,w_q$ in $W$ such that $code(w_p)=code(w_q)$. We may assume that
$code(w_1)=code(w_2)$.
Then for an arbitrary integer $i$ with $3\leq i\leq t$, let $S=\{w_1, w_2, w_i\}$. There must be a proper path of length 4 connecting $S$.
Suppose that the path is $w_au_{a'}w_bu_{b'}w_c$, where $\{w_a, w_b, w_c\}=\{w_1, w_2, w_i\}$ and
$\{u_{a'},u_{b'}\}=\{u_1,u_2\}$. By symmetry, we can assume that $u_{a'}=u_1, u_{b'}=u_2$.
Then $w_b=w_i$
for otherwise we have $c(w_au_1)=c(u_1w_b)$
or $c(w_bu_2)=c(u_2w_c)$, a contradiction.
For equivalence, let $w_a=w_1, w_c=w_2$.
Thus $c(w_iu_1)\neq c(w_iu_2)$. Without loss of generality, we can suppose that $c(w_iu_1)=1$
and $c(w_iu_2)=2$. Hence, $code(w_i)=(1,2)$ for each integer $3\leq i\leq t$. Now let $S=\{w_3, w_4, w_5\}$. It is easy to verify that there is no proper path $w_au_{a'}w_bu_{b'}w_c$ connecting $S$, for
we always have $c(w_au_{a'})=c(u_{a'}w_b),c(w_bu_{b'})=c(u_{b'}w_c)$.
\end{proof}

{\bf Claim 3:} Let $k$ be a integer where $k\geq3$. Then $px_3(K_{2,t})\leq k$ for $4<t\leq2k^2$.

\begin{proof}
Set $code(w_1)=(1,1), code(w_2)=(1,2), \dots, code(w_k)=(1,k)$;

$code(w_{k+1})=(2,1), code(w_{k+2})=(2,2), \dots, code(w_{2k})=(2,k)$;

\dots

$code(w_{k(k-1)+1})=(k,1), code(w_{k(k-1)+2})=(k,2), \dots, code(w_{k^2})=(k,k)$ \\(if there is).
And let $code(w_{k^2+i})=code(w_i)$ for $1\leq i\leq k^2$ (if there is).
Now, we prove that this is a 3-proper coloring of $K_{2,t}$. First of all, we notice that each code appears at most twice. Let $S$ be a 3-subset of $K_{2,t}$. We consider the following two cases.

{\bf Case 1:} Let $S=\{w_l, w_m, w_n\}$, where $1\leq l< m< n\leq t$.

{\bf Subcase 1.1:} If there is a $j\in\{1,2\}$ such that the colors of $u_jw_l, u_jw_m, u_jw_n$ are
pairwise distinct, then the tree $T=\{u_jw_l, u_jw_m, u_jw_n\}$ is a proper $S$-tree.

{\bf Subcase 1.2:} If there is no such $j$, that is, at least two of the edges ${u_jw_l, u_jw_m, u_jw_n}$ share the same color for both $j=1$ and $j=2$.

$i)$ $code(w_l),code(w_m)$ and $code(w_n)$ are pairwise distinct.
Without loss of generality, we suppose that
$c(u_1w_l)=c(u_1w_m)=a,c(u_2w_l)=c(u_2v_n)=b$ ($1\leq a,b\leq k^2$). Then $c(u_1w_n)\neq c(u_1w_l),c(u_2w_l)\neq c(u_2w_m)$.
If $a=b$, then we have $c(u_1w_n)\neq c(w_nu_2)$. So the path $P=w_lu_1w_nu_2w_m$ is a proper $S$-tree.
Otherwise, the path $P=w_nu_1w_lu_2w_m$ is a proper $S$-tree.

$ii)$ Two of the codes of the vertices in $S$ are the same.
Without loss of generality, we assume that $code(w_l)=code(w_m)=(a,b), code(w_n)=(x,y)$
($1\leq a,b,x,y\leq k^2$).
Notice that $(x,y)\neq(a,b)$, then suppose that $x\neq a$. Since $k\geq3$, there are two positive integers $p,q\leq k$
such that $p\neq a,p\neq x$ and $q\neq b,q\neq p$. Pick a vertex $w_r$ whose code is $(p,q)$ (this vertex exists since all of the $k^2$ codes appear at least once). Then the tree $T=\{u_1w_m, u_1w_n, u_1w_r, w_ru_2, u_2w_l\}$
is a proper $S$-tree.

{\bf Case 2:} $S=\{u_r,w_l,w_m\}$, where $1\leq l< m\leq t$. By symmetry, let $r=1$.

Suppose that $code(w_l)=(a,b), code(w_m)=(x,y)$
($1\leq a,b,x,y\leq k^2$).
If $a\neq x$ then the path $P=w_lu_1w_m$ is a proper $S$-tree.
If $a=x$, then we consider whether $b=y$ or not. We discuss two subcases.

$i)$ $b\neq y$, then at least one of them is not equal to $a$, assume that $b\neq a$. So the path $P=u_1w_lu_2w_m$ is a proper $S$-tree.

$ii)$  $b=y$, that is $code(w_l)=code(w_m)$, so all of the $k^2$ codes appear at least at once. Since $k\geq3$, there are two positive integers $p,q\leq k$
such that $p\neq a$ and $q\neq b,q\neq p$. Pick a vertex $w_r$ whose code is $(p,q)$. Then the path $P=w_lu_1w_ru_2w_m$
is a proper $S$-tree.

{\bf Case 3:} $S=\{u_1,u_2,w_l\}$, where $1\leq l\leq t$.

Suppose that $code(w_l)=(a,b)$
($1\leq a,b\leq k^2$). If $a\neq b$, then the path $P=u_1w_lu_2$ is a proper $S$-tree. Otherwise, according to our edge-coloring, there exists a vertex $w_r$ of $W$ with the code $(p,q)$ such that $q\neq a$ and $p\neq q$. Then the path $P=w_lu_2w_ru_1$ is a proper $S$-tree.
\end{proof}

{\bf Claim 4:} $px_3(K_{2,t})> k$ for $t>2k^2$.

\begin{proof}
For any edge-coloring of $K_{2,t}$ with $k$ colors, there must be a code which appears at least
three times. Suppose that $w_1, w_2, w_3$ are the vertices with the same code
and set $S=\{w_1, w_2, w_3\}$.
Then for any tree $T$ connecting $S$, there is a $j\in \{1,2\}$ such that $\{u_jw_l, u_jw_m\}\subseteq E(T)$ for
some $\{l,m\}\subseteq\{1, 2, 3\}$, $l\neq m$.
But $c(u_jw_l)=c(u_jw_m)$, so $T$ can not be a proper $S$-tree. Thus $px_3(K_{2,t})>k$.
\end{proof}

By Claims 2-4, we have the following result:
if $5\leq t\leq8$, $px_3(K_{2,t})=3$;
if $t>8$, let $k=\lceil\sqrt{\frac{t}{2}} \rceil$, then $3\leq \sqrt{\frac{t}{2}}\leq k<\sqrt{\frac{t}{2}}+1$,
i.e., $2(k-1)^2+1\leq t\leq 2k^2$, so
we have $px_3(K_{2,t})=k=\lceil\sqrt{\frac{t}{2}}\rceil$.
Notice that $px_3(K_{2,t})=3$ for $5\leq t\leq 18$.\hspace{3.5cm}{$\blacksquare$}


\begin{thm}\label{thm4}
For any integer $t\geq 3$, we have
\begin{displaymath}
px_3(K_{3,t}) = \left\{\begin{array}{ll}
2  & \text{if $3\leq t\leq 12$;}\\
3  & \text{otherwise.}
\end{array}\right.
\end{displaymath}
\end{thm}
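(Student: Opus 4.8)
The plan is to combine the two-sided bound $2\le px_3(K_{3,t})\le 3$ of Theorem~\ref{thm2} with two constructions: a $2$-coloring realizing the value $2$ when $3\le t\le 12$, and a proof that no $2$-coloring works once $t\ge 13$. For $t\le 4$ the graph $K_{3,t}$ is traceable, so Proposition~\ref{pro3} already gives $px_3=2$; thus the real work concerns $5\le t\le 12$ and $t\ge 13$. As in Theorem~\ref{thm3} I would encode a $2$-coloring by assigning to each $w\in W$ its code $(a_1,a_2,a_3)\in\{1,2\}^3$ with $a_i=c(u_iw)$. The single observation driving everything is that \emph{with only two colors a proper tree is a proper path}: at any vertex all incident tree-edges are pairwise adjacent, hence pairwise differently colored, so every vertex has degree at most $2$. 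Since such a path visits each of $u_1,u_2,u_3$ at most once, it meets $W$ in at most four vertices. Hence a $2$-coloring is $3$-proper exactly when every $3$-set $S$ lies on a proper path meeting $W$ in at most four vertices.

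For the lower bound I would assume a $3$-proper $2$-coloring on $t$ vertices and prove three restrictions on the multiset of codes. First, no code occurs three times: two vertices with equal code cannot be joined through a single $u_i$ (the two edges at $u_i$ would repeat a color), so each equal pair needs an internal separating $W$-vertex on the path, and three equal copies would force two separators while a proper path admits only one extra $W$-vertex. Second, a constant code $(1,1,1)$ or $(2,2,2)$ occurs at most once: two monochromatic vertices are each forced to be an endpoint, the colors along a $2$-colored proper path alternate, and the parity of the (even) number of edges forces the two end-edges to get different colors, impossible when both endpoints carry the same constant. Third --- the decisive restriction --- if $(1,1,1)$ occurs then no weight-one code (exactly one entry equal to $2$) may occur twice, and symmetrically $(2,2,2)$ forbids doubling any weight-two code: a monochromatic endpoint fixes the alternation so that \emph{both} copies must attach at the unique minority coordinate, which a simple path cannot do. Writing $a,b\in\{0,1\}$ for the presence of $(1,1,1),(2,2,2)$, these give
\[
t\ \le\ a+b+(\text{weight-one total})+(\text{weight-two total})\ \le\ 12,
\]
maximized at $a=b=0$ with all six non-constant codes doubled; hence $px_3(K_{3,t})\ge 3$ for $t\ge 13$.

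For the upper bound I would exhibit exactly the extremal scheme: for $t=12$ give each of the six non-constant codes to two vertices, and for $5\le t<12$ take a sub-multiset of this assignment chosen so that enough separating vertices survive. It then remains to verify $3$-propriety by cases on $S$: three vertices of $W$, a set $\{u_r,w,w'\}$, and $\{u_1,u_2,u_3\}$. In each case I would display a concrete proper path --- for instance $u_1w_xu_2w_yu_3$ with suitable non-constant $w_x,w_y$ handles $\{u_1,u_2,u_3\}$, while a doubled code is separated either by the third terminal or by an external non-constant vertex, exactly as the forced alternation prescribes. Combined with Proposition~\ref{pro2} this yields $px_3=2$ for $3\le t\le 12$.

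The main obstacle is pinning the threshold at exactly $12$ rather than the naive $14$ that the first two restrictions (each code at most twice, each constant at most once) would allow. This needs the asymmetric trade-off in the third restriction, whose proof rests on the forced-endpoint-and-parity structure special to two colors, together with the matching fact that the six non-constant codes really can be doubled \emph{simultaneously}. The case analysis for $\{u_1,u_2,u_3\}$ and for doubled codes is where one must be most careful, and checking the reduced schemes for each $5\le t\le 12$ is routine but tedious.
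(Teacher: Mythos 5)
Your proposal is correct and follows essentially the same route as the paper: encode a $2$-coloring by vertex codes, observe that with two colors every proper tree is a proper path meeting $W$ in at most four vertices, derive the same three restrictions (no code thrice, constant codes at most once, and a constant code excluding doubled codes of adjacent weight --- the contrapositive of the paper's Claim~2), count to get the threshold $12$, and realize $t=12$ by doubling the six non-constant codes. The only cosmetic differences are that the paper uses two explicit constructions (all eight codes once for $t\le 8$, the six non-constant codes doubled for $9\le t\le 12$) where you propose sub-multisets of a single scheme, and it phrases the final count as a contradiction rather than a direct inequality.
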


{\noindent\bf Proof.}
Let $U,W$ be the two partite sets of $K_{3,t}$, where $U=\{u_1,u_2,u_3\}$ and $W=\{w_1,w_2,\dots,w_t\}$.
Suppose that there exists a 3-proper coloring $c:E(K_{2,t})\rightarrow \{0,1,2,\dots,$ $ k-1\}$, $k\in\mathbb{N}$. Analogously to Theorem~\ref{thm3}, corresponding to the 3-proper coloring, there is a color code($w$) assigned to every vertex $w\in W$, consisting of an ordered 3-tuple $(a_1,a_2,a_3)$, where $a_i=c(u_iw)\in\{0,1,2,\dots,k-1\}$ for $i=1,2,3.$ In turn, if we give each vertex of $W$ a code, then we can induce the corresponding edge-coloring of $G$.

{\bf Case 1:} $3\leq t\leq 8$.

In this part, we give the vertices of $W$ the codes which induce a 3-proper coloring of $K_{3,t}$
with colors 0 and 1.
And by application of binary system, we can introduce the assignment of the codes in a clear way.
Recall the Abelian group  $\mathbb{Z}_2$.
We build a bijection
$f:\{w_1,w_2,\dots,w_8\}\rightarrow \mathbb{Z}_2\times \mathbb{Z}_2\times \mathbb{Z}_2$, where
$f(w_{4a_1+2a_2+a_3+1})= (a_1,a_2,a_3)$.
For instance, $f(w_3)=(0,1,0)$.
Under this condition, we use its restriction $f_W$ on $W$.
Now, we prove that $f$ induces a 3-proper coloring of $K_{3,t}$.
Let $S$ be an arbitrary 3-subset.

{\bf Subcase 1.1:} $S=\{w_l,w_m,w_n\}$.

Because there is no copy of any code, we can find a vertex in $U$, say $u_1$, such that
$u_1w_l,u_1w_m,u_1w_n$ are not all with the same color.
We may assume that $c(u_1w_l)=c(u_1w_m)=0$ and $c(u_1w_n)=1$.

$i)$ $code(w_l)=(0,0,0)$. Then there is a `1' in the code of $w_m$. By symmetry, assume
that $c(u_2w_m)=1$. Then there is a proper path $P=w_lu_2w_mu_1w_n$ connecting $S$.

$ii)$ $code(w_l)=(0,0,1)$. If $code(w_m)=(0,0,0)$, then we return to $i)$. Otherwise, the code of
$w_m$ is neither $(0,0,0)$ nor $(0,0,1)$. So $c(u_2w_m)=1$.
Then the proper $S$-tree is the same as that in $i)$.

$iii)$ $code(w_l)=(0,1,0)$. It is similar to $ii)$.

$iv)$ $code(w_l)=(0,1,1)$. Then either $c(u_2w_m)=0$ or $c(u_3w_m)=0$. By symmetry, we suppose that $c(u_2w_m)=0$. Then the path $P=w_mu_2w_lu_1w_n$ is a proper $S$-tree.

{\bf Subcase 1.2:} $S=\{u_j,w_l,w_m\}$.

If $c(u_jw_l)\neq c(u_jw_m)$, then the path $P=w_lu_jw_m$ is a proper $S$-tree. Otherwise, by symmetry, we assume that $c(u_jw_l)= c(u_jw_m)=0$, then there is a $j'\neq j$ such that
$c(u_{j'}w_l)\neq c(u_{j'}w_m)$ (otherwise $w_l,w_m$ will have the same code). So one of $c(u_{j'}w_l)$ and $c(u_{j'}w_m)$ equals 1, say $c(u_j'w_l)=1$.
Then the path $P=u_jw_lu_{j'}w_m$ is a proper $S$-tree.

{\bf Subcase 1.3:} $S=\{u_{j_1},u_{j_2},w_l\}$.

If $c(u_{j_1}w_l)\neq c(u_{j_2}w_l)$, then the path $P=u_{j_1}w_lu_{i_2}$ is a proper $S$-tree.
Otherwise, by symmetry, we assume that $c(u_{j_1}w_l)= c(u_{j_2}w_l)=0$. By the sequence of the codes according to $f$ and $t\geq3$,
we know that for any two vertices $u_{a'},u_{b'}$ of $U$, there exists a vertex $w\in W$ such that $c(u_{a'}w)\neq c(u_{b'}w)$.
Similar to Subcase 1.2, we can obtain a proper
$S$-tree.

{\bf Subcase 1.4:} $S=\{u_1,u_2,u_3\}$.

$P=u_1w_3u_2w_2u_3$ is a proper path connecting $S$.

{\bf Case 2:} $9\leq t\leq 12$.

Set
$code(w_1)=(0,0,1), code(w_2)=(0,1,0), code(w_3)=(0,1,1)$,

$code(w_4)=(1,0,0), code(w_5)=(1,0,1), code(w_6)=(1,1,0)$.

And let $code(w_{6+i})=code(w_i)$ for $1\leq i\leq6$(if there is).
For convenience, we denote $w_{6+i}=w_i'$. Now, we claim that this induces a 3-proper coloring of $K_{3,t}$.
Let $S$ be an arbitrary 3-subset of $K_{3,t}$.
Based on Case 1, we only consider about the case that $\{w_i,w_i'\}\subseteq S$
for some $1\leq i\leq6$. By symmetry, we suppose that $i=1$.
First of all, we list three proper paths containing ${w_1,w_1'}$: $P_1=w_1u_3w_2u_2w_1'$, $ P_2=w_1u_2w_3u_1w_4u_3
w_1'$ and $P_3=w_1u_1w_5u_2w_6u_3w_1'$, in which $w_j$ can be replaced by $w_j'$ for $2\leq j\leq6$.
Then, we can always find a proper path from $\{P_1,P_2,P_3\}$ connecting $S$ whichever the third vertex of $S$ is.

{\bf Case 3:} $t\geq13$.

Take into account Theorem~\ref{thm2}, we claim that $px_3(K_{3,t})=3$.
We prove it by contradiction.
If there is a 3-proper coloring of $K_{3,t}$ with two colors 0 and 1, then any proper tree for an arbitrary 3-subset $S$ is in fact a path.
Consider about the set $S\subseteq W$. As
the graph is bipartite and we just care about the shortest proper path connecting $S$,
there are only two possible types of such a path:

\uppercase\expandafter{\romannumeral1}: $w_au_{a'}w_bu_{b'}w_c$

\uppercase\expandafter{\romannumeral2}: $w_au_{a'}w_bu_{b'}w'u_{c'}w_c$\\
where $\{u_{a'},u_{b'},u_{c'}\}=U$ and $\{w_a,w_b,w_c\}=S, w'\in W \setminus S$.

Firstly, as $t\geq13$, we know that some code appears more than once.
But it can not appear more than twice. Otherwise, suppose that $w_i,w_i',w_i''$ are the three vertices with the same code, and let
$S=\{w_i,w_i',w_i''\}$.
Whether the proper path connecting $S$ is type
\uppercase\expandafter{\romannumeral1} or type \uppercase\expandafter{\romannumeral2}, it should
be $c(w_au_{a'})\neq c(w_bu_{a'})$, contradicting with the same code of the three vertices.

Secondly, we prove the following several claims by contradiction.

{\bf Claim 1:} The repetitive code can not be $(0,0,0)$ or $(1,1,1)$.

\begin{proof}
Suppose that $code(w_1)=code(w_2)=(0,0,0)$. Let $S=\{w_1,w_2,w_3\}$ where $w_3\in W\backslash \{w_1,w_2\}$, and let $P$ be a proper path connecting $S$. Then $w_1,w_2$ are the two end vertices of $P$, and so the two end edges of it are assigned the same color. However, since the length of $P$ is even, the colors of the end edges can not be the same, a contradiction. Analogously, the code $(1,1,1)$ cannot appear more than once.
\end{proof}

{\bf Claim 2:} If the code $(0,0,1)$ is repeated, then there is no vertex in $W$ with $(0,0,0)$ as its code.
\begin{proof}
Suppose that $code(w_1)=code(w_2)=(0,0,1), code(w_3)=(0,0,0)$. Let $S=\{w_1,w_2,w_3\}$, and let $P$ be a proper path connecting $S$. Then $w_3$ is one of the end vertices of $P$.
Moreover, the path $P$ must be type \uppercase\expandafter{\romannumeral2},
for in type \uppercase\expandafter{\romannumeral1}, we need
$c(w_au_{a'})\neq c(w_bu_{a'})$ and $c(w_bu_{b'})\neq c(w_cu_{b'})$, which is impossible for $S$.
We can also deduce that $u_{a'}=u_3$ because $c(w_au_{a'})\neq c(w_bu_{a'})$.
And $\{w_1,w_2\}\neq \{w_a,w_b\}$ since they are with the same code. So we have $w_a=w_3$.
Thus, $\{w_b,w_c\}=\{w_1,w_2\}$ and $\{u_{b'},u_{c'}\}=\{u_1,u_2\}$, contradicting with the fact
that $c(w_bu_{b'})\neq c(w_cu_{c'})$.
\end{proof}

Analogously, we have that the repetitive code $(0,1,0)$ or $(1,0,0)$ can not exist along with the code $(0,0,0)$,
respectively.  And the repetitive code $(0,1,1),(1,0,1)$ or $(1,1,0)$ can not exist along with the code $(1,1,1)$,
respectively.

Finally, as $t\geq13$ and no code could appear more than twice, there are at least~$7$ different codes in $W$
and at least~$5$ codes repeated. But considering about Claim~$2$ and its analogous results, it is a contradiction.
So $px_3(K_{3,t})=3$ when $t\geq13$.\hspace{2cm}{$\blacksquare$}


\begin{thm}\label{thm5}
For a complete bipartite graph $K_{s,t}$ with $t\geq s\geq4$, we have $px_3(K_{s,t})=2$.
\end{thm}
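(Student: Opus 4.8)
The plan is to prove the two bounds separately. The lower bound $px_3(K_{s,t})\ge 2$ is immediate from Proposition~\ref{pro2}, so the entire content is the upper bound $px_3(K_{s,t})\le 2$: I must produce a single edge-coloring $c:E(K_{s,t})\to\{0,1\}$ that is $3$-proper. Following the framework of Theorems~\ref{thm3} and~\ref{thm4}, I would encode the coloring by the \emph{code} $(c(u_1w),\dots,c(u_sw))\in\{0,1\}^s$ of each $w\in W$; since the coloring is completely determined by these $t$ codes, the task reduces to choosing the codes so that every $3$-set $S$ admits a proper $S$-tree. The guiding principle is that, unlike the cases $s\in\{2,3\}$, having $s\ge 4$ free coordinates leaves enough room to reroute any blocked connection along a longer proper path through auxiliary vertices of $W$ -- exactly the flexibility that $K_{2,t}$ and $K_{3,t}$ lack once $t$ is large.

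Concretely, I would fix a \emph{separating} assignment of codes: choose a short list $L\subseteq\{0,1\}^s$ -- for instance the ``consecutive-pair'' codes whose set of $1$-positions is $\{p,p+1\}\subseteq\mathbb{Z}_s$, together with a few complementary codes -- and assign the codes of $w_1,\dots,w_t$ by cycling through $L$, arranged so that (i) no code is all-$0$ or all-$1$; (ii) for every pair of coordinates $i\neq j$ and every prescribed pattern on $\{i,j\}$ there is a vertex of $W$ realizing it; and (iii) once $t$ is large each code of $L$ occurs on several vertices. Property (ii) is the engine: it supplies, for each ordered pair $u_i,u_j$, a \emph{witness} $w$ with $c(u_iw)\neq c(u_jw)$ in either orientation, and such witnesses are the middle vertices of the short proper paths used in the verification.

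With such a coloring in hand I would check $3$-properness by a case analysis on the type of $S$, exhibiting an explicit proper path in each case, in the style of the previous theorems. For $S=\{u_i,u_j,u_k\}$ I would use $u_i\,w_a\,u_j\,w_b\,u_k$ with $w_a$ a witness separating $\{i,j\}$ and $w_b$ a distinct witness separating $\{j,k\}$. For $S=\{u_i,u_j,w_l\}$ I would take $u_i\,w_l\,u_j$ when the code of $w_l$ separates $i,j$, and otherwise detour as $w_l\,u_i\,w'\,u_j$ through a witness $w'$. For $S=\{u_j,w_l,w_m\}$ I would take $w_l\,u_j\,w_m$ when the codes differ at $j$, and otherwise detour similarly. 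For $S=\{w_l,w_m,w_n\}\subseteq W$ I would use a length-$4$ path with one vertex in the middle whenever some coordinate separates the codes suitably, and route through auxiliary vertices of $W$ when codes coincide. Here $t\ge s\ge 4$ is used twice: the $s\ge 4$ coordinates furnish the positions needed to build the rerouted paths, while $t\ge s$ together with (iii) guarantees enough auxiliary $W$-vertices, disjoint from $S$, to serve as their internal vertices.

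I expect the genuine difficulty to be the last case, $S\subseteq W$, once $t$ is large enough that several vertices of $S$ share a code -- unavoidable when $t$ exceeds the number of codes in use. A star centered at one $u_i$ can never be proper with only two colors, so every $S$-tree here is essentially a proper path, and when two or three codes agree the naive length-$4$ path is blocked at the repeated coordinate. The remedy is to thread the path through auxiliary $W$-vertices whose codes set a chosen pair of coordinates to prescribed values -- for three equal codes, a path such as $w_l\,u_1\,w'\,u_2\,w_m\,u_3\,w''\,u_4\,w_n$ -- and the crux is to have pre-loaded the coloring with enough such auxiliaries and to verify they can be chosen distinct from $S$ and from one another; this is precisely the step that collapses at $s=3$, which explains the threshold and is consistent with Theorem~\ref{thm4}. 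A secondary, bookkeeping-heavy point is to confirm that one fixed choice of codes meets all four cases at once; I would dispatch the small values of $t$ (where codes need not repeat) by a direct finite check and the large-$t$ values by the rerouting argument above.
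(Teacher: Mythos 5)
Your overall strategy -- a single $2$-coloring of $K_{s,t}$ recorded via the code $(c(u_1w),\dots,c(u_sw))$ of each $w\in W$, followed by a case analysis on the type of the $3$-set $S$ -- is the same genre of argument the paper uses, and your identification of $S\subseteq W$ with repeated codes as the crux is accurate. But as written this is a plan, not a proof, and the gap is exactly at the point you flag as the difficulty. You never commit to a concrete code assignment: ``the consecutive-pair codes \dots together with a few complementary codes,'' arranged to satisfy properties (i)--(iii), is not a coloring one can verify, and the verification itself is described only in the subjunctive. Moreover, the rerouted path you propose for three vertices of equal code, $w_l\,u_1\,w'\,u_2\,w_m\,u_3\,w''\,u_4\,w_n$, is not automatically proper: writing $(a_1,\dots,a_s)$ for the common code, properness at $w'$ forces $c(u_1w')=1-a_1$ and $c(w'u_2)=1-a_2$ to differ, hence $a_1\neq a_2$; properness at $w_m$ forces $a_2\neq a_3$; and at $w''$ it forces $a_3\neq a_4$. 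So the four indices threaded through must realize an alternating pattern in the common code, and the witnesses $w',w''$ with the complementary values at those coordinates must exist and avoid $S$. Whether all of this can be arranged simultaneously depends entirely on the code list you declined to fix, so the theorem is not yet proved.

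For contrast, the paper resolves this by a more rigid construction: it colors so that $C_s=u_1w_1u_2w_2\dots u_sw_su_1$ is a properly colored cycle ($c(u_iw_i)=0$, $c(u_iw_j)=1$ for $i\neq j$ with $j\leq s$) and gives every remaining vertex $w_r$ ($r>s$) the single alternating code $c(w_ru_i)=i\pmod 2$. The only repeated code is the alternating one, which by design splices properly into $C_s$ at every second position, so each case of $S$ is handled by walking along a segment of $C_s$ and making at most two local detours; no existence argument for witnesses is needed. If you want to salvage your version, the minimal fix is to fix one explicit list of codes (your consecutive-pair codes do have two coordinates of each color once $s\geq4$, which is enough for the alternation constraints above), prove properties (i)--(iii) for it, and then actually carry out the four-case verification, including checking that the auxiliary vertices can be chosen outside $S$ and pairwise distinct.
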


{\noindent\bf Proof.} Let $U,W$ be the two partite sets of $K_{s,t}$, where $U=\{u_1,u_2,\dots,u_s\}$ and $W=\{w_1,w_2,\dots,w_t\}$.
And denote a cycle $C_s=u_1w_1u_2w_2\dots u_sw_su_1$.
Moreover, if $u,v\in V(C_s)$, then we use $uC_sv$ to denote the segment of $C_s$ from $u$ to $v$ in the clockwise direction, otherwise we denote it by $uC_s'v$.
Then we demonstrate a 3-proper coloring of $K_{s,t}$ with two colors 0 and 1.
Let $c(u_iw_i)=0$ ($1\leq i\leq s$) and $c(u_iw_j)=1$ ($1\leq i\neq j\leq s$).
And assign $c(w_ru_i)=i\ (mod\ 2)$ ($1\leq i \leq s,s<r\leq t$).
Now we prove that this coloring is a 3-proper coloring of $K_{s,t}$. Consider about its 3-subset $S$.

$i)$ $S\subseteq V(C_s)$. The proper path is in $C_s$.

$ii)$ $S=\{w_l,w_m,w_n\}$ where $l,m,n>s$.
Then the path $P=w_lu_1w_1u_2w_mu_3w_3u_4w_n$ is a proper $S$-tree.

$iii)$ $S=\{w_l,w_m,w_n\}$ where $l\leq s, m,n>s$.
If $c(w_mu_l)=1$, then the path $P=w_mu_lw_lC_su_2w_n$ is a proper $S$-tree.
If $c(w_mu_l)=0$, then the proper $S$-tree is the path
$P=w_mu_lw_{l-1}u_{l-1}w_nu_{l-2}C_s'w_l$, where $u_0=u_s,u_{-1}=u_{s-1}$ if $i_1= 2$.

$iv)$ $S=\{u_j,w_l,w_m\}$ where $l,m>s$.
The way to find a proper $S$-tree is similar with that in $iii)$.

$v)$ $S=\{u_j,w_l,w_m\}$ where $l\leq s,m>s$. If $c(w_mu_j)=1$, then the proper $S$-tree is the path $P=w_mu_jw_jC_sw_l$.
If $c(w_mu_j)=0$, then the path $P=w_mu_jC_s'w_l$ is a proper $S$-tree.

$vi)$ $S=\{u_{j_1},u_{j_2},w_i\}$ where $i>s$.
The way to find a proper $S$-tree is similar with that in $v)$.\hspace{13.4cm}{$\blacksquare$}

{\noindent\bf Remarks.}
Here, we introduce a generalization of $k$-proper index which was proposed by Chang et. al. in~\cite{H.1} recently. Let $G$ be a nontrivial $\kappa$-connected graph of order $n$, and let $k$ and $\ell$ be two integers with $2\leq k\leq n$ and $1\leq \ell \leq \kappa$. For $S\subseteq V(G)$, let $\{T_1,T_2,\dots,T_\ell\}$ be a set of $S$-tree, they are
\emph{internally disjoint} if $E(T_i)\cap E(T_j)=\emptyset$ and $V(T_i)\cap V(T_j)=S$ for every pair of distinct integers $i,j$ with $1\leq i,j \leq  \ell$. The $(k,\ell)$-proper index of $G$, denoted by $px_{k,\ell}(G)$, is the minimum number of colors that are required in an edge-coloring of $G$ such that for every $k$-subset $S$ of $V(G)$, there exist $\ell$ internally disjoint proper $S$-trees connecting them. In their paper, they investigated the complete bipartite graphs and obtained the following.

\begin{thm}\cite{H.1}\label{thm6}
 Let $s$ and $t$ be two positive integers with $t=O(s^r), r\in\mathbb{R}$ and $r\geq 1$. For every pair of integers $k, \ell$ with $k\geq 3$, there exists a positive integer $N_3=N_3(k,\ell)$ such that $px_{k,\ell}(K_{s,t})=2$ for every integer $s\geq N_3$.
\end{thm}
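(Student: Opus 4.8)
The plan is to prove Theorem~\ref{thm6} by exhibiting an explicit $2$-coloring of $K_{s,t}$ that works for \emph{all} $k$-subsets $S$ simultaneously, and then arguing that when $s$ is large enough this coloring produces $\ell$ internally disjoint proper $S$-trees. The guiding idea is that a $2$-colored $K_{s,t}$ already has abundant proper paths between any pair of vertices (indeed $pc(K_{s,t})=2$ for reasonable sizes), and the real content is \emph{packing} many such trees internally disjointly. First I would fix the two partite sets $U=\{u_1,\dots,u_s\}$ and $W=\{w_1,\dots,w_t\}$, with $t=O(s^r)$, and assign to each $w\in W$ a color code as in Theorems~\ref{thm3}--\ref{thm5}: an ordered $s$-tuple over $\{0,1\}$ recording the colors $c(u_iw)$. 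With two colors there are $2^s$ possible codes, so the hypothesis $t=O(s^r)$ is exactly what guarantees that, once $s$ is large, each code is used only a bounded number of times and many codes remain unused; this slack is the engine of the argument.

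The key steps, in order, would be as follows. (1)~Choose a convenient $2$-coloring, for instance the one from Theorem~\ref{thm5} based on a spanning cycle $C_s$ together with the parity rule $c(w_ru_i)=i \pmod 2$; this already gives $px_3(K_{s,t})=2$ and hence a single proper $S$-tree for every $3$-subset. (2)~Upgrade from one tree to $\ell$ trees: given a $k$-subset $S$, I would build the $\ell$ internally disjoint proper $S$-trees greedily. Each proper $S$-tree I construct uses only a bounded number of ``hub'' vertices of $U$ (the Steiner vertices through which the tree branches), so after constructing $j$ trees only $O(j)$ vertices of $U$ are forbidden from reuse. Since $\ell$ is fixed, the total number of forbidden hubs is $O(\ell)=O(1)$, whereas $s\to\infty$, so there always remain untouched vertices of $U$ to serve as fresh internal vertices for the next tree. (3)~To make each individual tree proper, I would route it through hub vertices whose codes guarantee that consecutive edges at every branch point receive different colors; the large supply of unused codes (guaranteed by $t=O(s^r)$ and $s$ large) ensures such hubs exist. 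Collecting these requirements yields an explicit threshold $N_3=N_3(k,\ell)$ beyond which the construction goes through, and combined with $px_{k,\ell}(K_{s,t})\geq pc(K_{s,t})\geq 2$ we conclude equality.

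The hard part will be step~(2): guaranteeing \emph{internal disjointness} of the $\ell$ trees while keeping each one proper. The difficulty is that a proper $S$-tree may be forced to pass through specific vertices of $U$ dictated by the codes of the vertices in $S$, and two different trees could be pushed to compete for the same scarce hubs, especially when several vertices of $S$ share a code or when $S$ contains many vertices of $U$ itself. To handle this I would partition the analysis by the structure of $S$: how many of its $k$ vertices lie in $U$ versus $W$, and how the $W$-vertices of $S$ distribute among codes. For each type I would show that the number of ``bottleneck'' vertices any single tree must occupy is bounded by a constant depending only on $k$, and that distinct trees can be steered through disjoint hub sets by exploiting the $2^s-O(t)$ unused codes. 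Once this bounded-occupancy-plus-abundant-slack dichotomy is established uniformly over all $O(\binom{t}{k})$ subset-types, the greedy packing in step~(2) succeeds for all $s\geq N_3(k,\ell)$, which is precisely the assertion of the theorem.
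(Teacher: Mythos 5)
This statement is quoted from \cite{H.1}; the paper you are reading gives no proof of it, so there is nothing internal to compare against. Judged on its own terms, your proposal is not yet a proof: it is a plan whose central difficulty you explicitly defer. Step~(2) --- producing $\ell$ internally disjoint proper $S$-trees for \emph{every} $k$-subset $S$ --- is exactly the content of the theorem, and you only describe what you ``would show'' (bounded occupancy of hubs, steering trees through disjoint hub sets) without carrying out any of it. Until that is done for each structural type of $S$ (in particular when $S$ meets $U$, when several vertices of $S\cap W$ force the same branch vertices, and when the properness constraint at a shared vertex of $S$ interacts with edge-disjointness there), the argument has a hole precisely where the theorem is hard.

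There are also two concrete inconsistencies. First, the coloring you import from Theorem~\ref{thm5} assigns every vertex $w_r$ with $r>s$ the \emph{same} code $(1,0,1,0,\dots)$, which directly contradicts the ``each code is used only a bounded number of times and many codes remain unused'' premise that step~(3) relies on; you cannot simultaneously use that coloring and that premise. Second, you misidentify the role of the hypothesis $t=O(s^r)$: since the colorer chooses the coloring, $t\le 2^s$ alone would already let you spread codes, so the polynomial bound is not there to guarantee code multiplicity. Its actual role (in the known proof in \cite{H.1}) is to make a union bound work: one colors the edges uniformly at random with two colors, shows that for a fixed $k$-set the probability that $\ell$ internally disjoint proper $S$-trees fail to exist decays exponentially in $s$, and then sums over the $O\bigl((s+t)^k\bigr)=O(s^{rk})$ choices of $S$, which is polynomial in $s$. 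That probabilistic route sidesteps the hub-packing case analysis entirely; if you want a constructive proof instead, you must actually execute your step~(2), which is likely to be substantially more work than the paragraph you have allotted to it.
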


Obviously, they did not give the exact value of $px_{k,\ell}(K_{s,t})$, even for $k=3$ and $\ell=1$.
Our Theorem~\ref{thm5} completely determines the value of $px_{k,\ell}(K_{s,t})$ for $k=3$ and $\ell=1$, without
using the condition that $t=O(s^r), r\in\mathbb{R}$ and $r\geq 1$.

\section{The $3$-proper index of a complete multipartite graph}
With the aids of Theorems~\ref{thm3},~\ref{thm4} and~\ref{thm5}, we are now able to determine the 3-proper index of all complete multipartite graphs. First of all, we give a useful theorem.

\begin{thm}\cite{Dirac}\label{thm7}
Let $G$ be a graph with $n$ vertices. If $\delta(G)\geq \frac{n-1}{2}$, then $G$ has a Hamiltonian path.
\end{thm}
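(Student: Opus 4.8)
The plan is to prove this by a longest-path argument, which is the standard route for Dirac-type statements. First I would record the easy observation that the hypothesis $\delta(G)\geq \frac{n-1}{2}$ already forces $G$ to be connected: if $G$ split into two components, the smaller one would contain at most $\lfloor n/2\rfloor$ vertices, so a vertex inside it would have degree at most $\lfloor n/2\rfloor -1 < \frac{n-1}{2}$, a contradiction. This connectivity is exactly what the final step of the argument will consume.

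Next I would take a longest path $P=v_1v_2\cdots v_k$ in $G$ and aim to show $k=n$. By maximality of $P$, every neighbor of the endpoint $v_1$ must already lie on $P$ (otherwise we could extend $P$ beyond $v_1$), and likewise every neighbor of $v_k$ lies on $P$. The core of the proof is to upgrade $P$ into a cycle through all of its $k$ vertices. To this end I would set $A=\{\,i:v_1v_i\in E(G)\,\}$ and $B=\{\,i:v_{i-1}v_k\in E(G)\,\}$, both regarded as subsets of $\{2,\dots,k\}$; then $|A|=\deg(v_1)$ and $|B|=\deg(v_k)$, so $|A|+|B|\geq 2\cdot\frac{n-1}{2}=n-1$. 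Assuming for contradiction that $k<n$, the common ground set $\{2,\dots,k\}$ has only $k-1\leq n-2 < n-1$ elements, so by pigeonhole $A\cap B\neq\emptyset$. Choosing $i\in A\cap B$ produces the chords $v_1v_i$ and $v_{i-1}v_k$, which together with the two path-segments give the cycle $v_1 v_i v_{i+1}\cdots v_k v_{i-1} v_{i-2}\cdots v_1$ covering exactly $V(P)$.

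Finally I would invoke connectivity to close the argument. Since $k<n$, there is a vertex $w\notin V(P)$, and by connectivity some such $w$ is adjacent to a vertex $v_j$ lying on the cycle $C$ just constructed. Cutting $C$ open at $v_j$ and prepending $w$ yields a path on $k+1$ vertices, contradicting the maximality of $P$. Hence $k=n$ and $P$ is a Hamiltonian path of $G$.

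The step I expect to need the most care is the index bookkeeping in the pigeonhole argument: one must compare $A$ and $B$ over the same index set $\{2,\dots,k\}$, check that the strict inequality $|A|+|B|>k-1$ holds precisely when $k<n$, and verify that the chosen chord genuinely closes a spanning cycle of $P$ rather than a shorter one (including the degenerate boundary cases $i=2$ and $i=k$). A slicker alternative, if one is content to cite the Hamiltonian-cycle form of Dirac's theorem as a black box, is to adjoin a new vertex adjacent to every vertex of $G$: the resulting graph on $n+1$ vertices has minimum degree at least $\frac{n+1}{2}$, hence a Hamiltonian cycle, and deleting the new vertex leaves a Hamiltonian path of $G$.
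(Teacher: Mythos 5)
Your proof is correct. Note, however, that the paper does not prove this statement at all: Theorem~\ref{thm7} is quoted as a known result and attributed to Dirac~\cite{Dirac}, so there is no in-paper argument to compare against. What you supply is the standard self-contained derivation: connectivity from the degree bound, a longest path $P=v_1\cdots v_k$, the index sets $A=\{i: v_1v_i\in E\}$ and $B=\{i: v_{i-1}v_k\in E\}$ inside $\{2,\dots,k\}$ with $|A|+|B|\geq n-1>k-1$ when $k<n$, the resulting crossing chords closing $V(P)$ into a cycle, and connectivity producing a longer path -- a contradiction. The bookkeeping you flag does check out, including the boundary cases $i=2$ and $i=k$, where the two chords degenerate to the single edge $v_1v_k$ and the construction still yields a spanning cycle of $V(P)$. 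Your alternative reduction (adjoin a universal vertex, apply the Hamiltonian-cycle form of Dirac's theorem, then delete it) is also valid and is arguably the cleanest way to present this if the cycle version is taken as known. Either route is a legitimate replacement for the bare citation; the direct argument buys self-containedness, the reduction buys brevity.
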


\begin{thm}\label{thm8}
Let $G=K_{n_1,n_2,\dots,n_r}$ be a complete multipartite graph, where $r\geq3$ and $n_1\leq n_2\leq \dots\leq n_r$. Set $s=\sum^{r-1}_{i=1}n_i$ and $t=n_r$. Then we have
\begin{displaymath}
px_3(G) = \left\{\begin{array}{ll}
3  & \text{if $G=K_{1,1,t}, 5\leq t\leq 18$}\\
   &  \text{  \ or $G=K_{1,2,t}, t\geq 13$}\\
   &  \text{  \ or $G=K_{1,1,1,t}, t\geq 15$;}\\
\lceil\sqrt{\frac{t}{2}}\rceil  & \text{if $G=K_{1,1,t}, t\geq 19$;}\\
2  & \text{otherwise.}
\end{array}\right.
\end{displaymath}
\end{thm}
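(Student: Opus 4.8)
The plan is to combine two upper-bound tools with case-specific lower bounds. Writing $U=V_1\cup\dots\cup V_{r-1}$ (of size $s$) and $W=V_r$ (of size $t$), every $U$--$W$ edge lies in $G$, so $K_{s,t}$ is a connected spanning subgraph and Proposition~\ref{pro1} gives $px_3(G)\le px_3(K_{s,t})$; Theorems~\ref{thm3}--\ref{thm5} then supply the upper bounds. Also $\delta(G)=n-n_r=s$ with $n=s+t$, so $\delta(G)\ge\frac{n-1}{2}\iff t\le s+1$; hence Theorem~\ref{thm7} makes $G$ traceable when $t\le s+1$, and Proposition~\ref{pro3} yields $px_3(G)=2$ there, while $t\ge s+2$ makes $G$ non-traceable, since the $t$ pairwise non-adjacent vertices of $W$ require at least $t-1$ separating vertices on a Hamiltonian path, only $s$ of which are available. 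When $t\ge s+2$ we have $t>s\ge2$, and $px_3(G)\le px_3(K_{s,t})$ already gives $px_3(G)=2$ whenever $s\ge4$ (Theorem~\ref{thm5}), or $s=3$ with $t\le12$ (Theorem~\ref{thm4}), or $s=2$ with $t=4$ (Theorem~\ref{thm3}). This leaves only $K_{1,1,t}$ with $t\ge5$, $K_{1,2,t}$ with $t\ge13$, and $K_{1,1,1,t}$ with $t\ge13$.

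For the lower bounds I would reuse the color-code framework of Theorems~\ref{thm3} and~\ref{thm4}, recording for each $w\in W$ the tuple of colors on its edges to $U$. The key auxiliary fact is that with only two colors a proper tree cannot branch (a degree-$3$ vertex would need three distinct colors), so every proper $S$-tree is a path, and when $S$ consists of three vertices of $W$ the path must separate them by vertices of $U$. For $|U|=2$ this rules out the single edge $u_1u_2$ entirely, so $K_{1,1,t}$ behaves exactly like $K_{2,t}$: the pigeonhole arguments of Claims~2 and~4 of Theorem~\ref{thm3} (three $W$-vertices sharing a code force two equally colored edges at a common $u_j$) transfer verbatim, giving $px_3(K_{1,1,t})=px_3(K_{2,t})$, i.e.\ $3$ for $5\le t\le18$ and $\lceil\sqrt{t/2}\rceil$ for $t\ge19$.

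The delicate cases are $s=3$, where $G[U]$ is the path $u_2u_1u_3$ for $K_{1,2,t}$ and the triangle $u_1u_2u_3$ for $K_{1,1,1,t}$; now even with two colors the extra edges create new proper paths through three vertices of $W$, for instance $w_au_3w_bu_1u_2w_c$. For $K_{1,2,t}$ I would re-run the parity and counting analysis of Theorem~\ref{thm4} Case~3 — no code appears three times, and the Claim~2 analogues restrict which codes may coexist — and verify that the two extra edges do not enlarge the admissible code multiset enough to reach $t=13$, so the threshold stays at $13$ and $px_3(K_{1,2,t})=3$. For $K_{1,1,1,t}$ the triangle genuinely helps: I would exhibit an explicit $2$-coloring (codes in $\{0,1\}^3$ with controlled repetition together with a suitable coloring of the triangle) showing $px_3=2$ for $t\in\{13,14\}$, while refining the counting to prove that $t\ge15$ still forces a third color. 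This last boundary analysis for $K_{1,1,1,t}$ — quantifying exactly how many proper $S$-trees the triangle supplies and pinning the sharp cutoff at $15$ (verifying the $t=14$ coloring against all $3$-subsets, and excluding every augmented two-copies arrangement of the eight codes when $t=15$) — is where I expect the main difficulty to lie; the analogous but easier check that $K_{1,2,t}$ keeps the threshold $13$ is the second most technical point.
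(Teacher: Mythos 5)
Your reduction framework coincides with the paper's: bound $px_3(G)$ above by $px_3(K_{s,t})$ via the spanning subgraph and Proposition~\ref{pro1}, dispose of the case $t\le s$ (you use the sharper $t\le s+1$) by Theorem~\ref{thm7} and Proposition~\ref{pro3}, and observe that Theorems~\ref{thm3}--\ref{thm5} settle everything except $K_{1,1,t}$ with $t\ge 5$, $K_{1,2,t}$ with $t\ge 13$, and $K_{1,1,1,t}$ with $t\ge 13$. Your treatment of $K_{1,1,t}$ is also essentially the paper's and is sound: with only two vertices in $U$ a path through three vertices of $W$ cannot use the edge $u_1u_2$, and the tree argument of Claim~4 of Theorem~\ref{thm3} uses only that every $w\in W$ has neighbourhood contained in $\{u_1,u_2\}$, so both lower bounds carry over and $px_3(K_{1,1,t})=px_3(K_{2,t})$.

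The genuine gap is in the last two families, where you only announce that you would ``re-run the analysis and verify'' the thresholds; that verification is precisely the content of the paper's proof, and your proposal does not produce it. Three pieces are missing. (i) For $K_{1,2,t}$, $t\ge 13$: after fixing the colour of $u_1u_2$ one must classify the new proper paths of type $w_au_{a'}w_bu_{b'}u_{c'}w_c$ and derive explicit incompatibilities between repeated codes --- the paper shows that a repeated code $(0,0,0)$ excludes the code $(0,0,1)$ and a repeated $(0,1,1)$ excludes $(1,1,1)$ (and conversely) --- and then count to show that two colours accommodate at most $12$ vertices of $W$. (ii) For $K_{1,1,1,t}$ with $t=13,14$ one must exhibit a concrete $2$-colouring --- the paper takes the seven nonzero codes of $\{0,1\}^3$, each used at most twice, together with a monochromatic triangle --- and check all $3$-subsets, in particular pairs sharing the code $(1,1,1)$, which is exactly what the triangle makes reachable and what moves the cutoff from $12$ (for $K_{3,t}$) to $14$; ``a suitable coloring of the triangle'' is not yet an argument. (iii) For $K_{1,1,1,t}$, $t\ge 15$, one needs the counting that combines ``no code appears three times'' with the surviving incompatibility between a repeated $(0,0,0)$ and the code $(0,0,1)$ to cap $|W|$ at $14$. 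Your plan points at the right places, but as written the proof of the two hardest boundary values ($t=13$ for $K_{1,2,t}$ and $t=15$ for $K_{1,1,1,t}$) is absent.
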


{\noindent\bf Proof.} The graph $G$ has a $K_{s,t}$ as its spanning subgraph, so it follows from Propositions~\ref{pro1} and~\ref{pro2} that $2\leq px_3(G)\leq px_3(K_{s,t})$. In the following, we discuss two cases according to the relationship between $s$ and $t$.

{\bf Case 1:} $s\leq t$. Let $U_1,U_2,\dots,U_r$ denote the different $r$-partite sets of $G$, where $|U_i|=n_i$ for each integer $1\leq i\leq r$.

When $s\geq 4$, then by Theorem~\ref{thm5}, we have $px_3(G)= px_3(K_{s,t})=2$.
When $s\leq 3$, there are only three possible values of $(n_1,n_2,\dots,n_{r-1})$.

{\bf Subcase 1:} $(n_1,n_2,\dots,n_{r-1})=(1,1)$. Set $U_1=\{u_1\}, U_2=\{u_2\}$. Under this condition, giving the edge $u_1u_2$ an arbitrary color, the proof is exactly the same as that of Theorem \ref{thm3}. So it holds that $px_3(G)=px_3(K_{2,t})$.

{\bf Subcase 2:} $(n_1,n_2,\dots,n_{r-1})=(1,2)$. Set $U_1=\{u_1\}, U_2=\{u_2,u_3\}$ and $W=U_r$. By Theorem~\ref{thm4}, we have $px_3(G)=px_3(K_{3,t})=2$ if $t\leq 12$;
$px_3(G)\leq px_3(K_{3,t})=3$ if $t>12$. We claim that $px_3(G)=3$ if $t>12$. Assume, to the contrary, that $G$ has a 3-proper coloring with
two colors 0 and 1.
By symmetry, we assume that $c(u_1u_2)= 0$.
With the similar reason in Case~$3$ of the proof of Theorem~\ref{thm4}, no code can appear more than twice.
 And recall the bijection $f$ defined in that proof. To label the vertices in $W$, we use its inverse $f^{-1}:
 (a_1,a_2,a_3)\mapsto w_{4a_1+2a_2+a_3+1}$, and denote by $w_i'$ the copy of the vertex $w_i$ with $1\leq i\leq 8$. Then we prove the following results by contradiction.

{\bf Claim 1:} $\{w_1,w_1',w_2\}\nsubseteq W$ and $\{w_2,w_2',w_1\}\nsubseteq W$.

\begin{proof}
Set $S=\{w_1,w_1',w_2\}$. We know from the proof of Theorem~\ref{thm4} that there is no proper path of type \uppercase\expandafter{\romannumeral1} or \uppercase\expandafter{\romannumeral2}.
So the proper path $P$ connecting $S$ is type \uppercase\expandafter{\romannumeral3}: $w_au_{a'}w_bu_{b'}u_{c'}w_c$.
Then $w_1,w_1'$ must be the end vertices of $P$, and so $w_b=w_2$ and $u_{a'}=u_3$. Since $c(w_au_{a'})= 0$, $c(u_{b'}u_{c'})= 1$, contradicting with $c(u_1u_2)= 0$. Hence, we get $\{w_1,w_1',w_2\}\nsubseteq W$.
Similarly, we have $\{w_2,w_2',w_1\}\nsubseteq W$.
\end{proof}

{\bf Claim 2:} $\{w_4,w_4',w_8\}\nsubseteq W$
and $\{w_8,w_8',w_4\}\nsubseteq W$.

\begin{proof}
 Set $S=\{w_4,w_4',w_8\}$. Similar to Claim~$1$,
any proper path $P$ connecting $S$ should be type \uppercase\expandafter{\romannumeral3}: $w_au_{a'}w_bu_{b'}u_{c'}w_c$.
Then $w_8$ must be an end vertex of $P$, and so both of the end edges of $P$ are colored with 1. Thus $u_{a'}=u_1$. Then $\{u_{b'},u_{c'}\}=\{u_2,u_3\}$ and $c(u_2u_3)$=0, contradicting with the fact that $u_2u_3\notin E(G)$.
Similarly, we have $\{w_8,w_8',w_4\}\nsubseteq W$.
\end{proof}

So there are four cases that some vertices can not exist in $W$ at the same time, and each code appears at most twice. However, there are more than 12 vertices in $W$, a contradiction.
So $px_3(G)=px_3(K_{3,t})=3$ when $t>12$.

{\bf Subcase 3:} $(n_1,n_2,\dots,n_{r-1})=(1,1,1)$. Set $U=\cup_{j=1}^{r-1}U_j=\{u_1,u_2,u_3\}$ and $W=U_r$.

{\bf Claim 3:} $px_3(G)=2$ if $t\leq 14$.

\begin{proof} By Theorem~\ref{thm4}, we have $px_3(G)=px_3(K_{3,t})=2$ if $t\leq 12$;
$px_3(G)\leq px_3(K_{3,t})=3$ if $t>12$.
When $t=13$ or 14, we recall $code(w)$ defined in Case 2 of Theorem~\ref{thm4}.
Set

$code(w_1)=(0,0,1), code(w_2)=(0,1,0), code(w_3)=(0,1,1),code(w_4)=(1,0,0)$,

$code(w_5)=(1,0,1), code(w_6)=(1,1,0), code(w_7)=(1,1,1)$.

And let $code(w_{7+i})=code(w_i)$ for $1\leq i\leq7$ (if there is) and $c(u_iu_j)=0$ for $1\leq i\neq j\leq 3$. For convenience, we denote $w_{7+i}=w_i'$. Now, we claim that this induces a 3-proper coloring of $G$.
Let $S$ be an arbitrary 3-subset of $G$.
Based on Theorem~\ref{thm4}, we only consider about the case that $w_7(w_7')\in S$.
When $S=\{w_1,w_7,w_7'\}$,
then the path $P=w_7u_1w_1u_3u_2w_7'$ is a proper path connecting $S$. Similarly, we can find a proper path in type \uppercase\expandafter{\romannumeral3} connecting $S$ whichever the two other vertices of $S$ are.
\end{proof}

{\bf Claim 4:} $px_3(G)=3$ if $t>14$.

\begin{proof} Assume, to the contrary, that $G$ has a 3-proper coloring with
two colors 0 and 1. If the edges of $G[U]$ are colored with two different colors, then we set $u_2$ the common vertex of two edges with two different colors.
Moreover, without loss of generality, we suppose that $c(u_1u_2)=0$.
Similar to Subcase 2, we have $px_3(G)=3$ if $t>12$.
If all the edges of $G[U]$ are colored with one color, say 0.
Repeat the discussion in Subcase~$2$, then we know Claim~$1$ is also true under this condition.
As $t\geq15$ and no code could appear more than twice, there are at least 8 different codes in $W$
and at least 7 codes repeated. But from Claim 1, we know $\{w_1,w_1',w_2\}\nsubseteq W$ and $\{w_2,w_2',w_1\}\nsubseteq W$.
So $px_3(G)=3$ when $t\geq15$.
\end{proof}

{\bf Case 2:} $s\geq t$. Under this condition, we have $\delta(G)\geq \frac{n-1}{2}$. By Theorem~\ref{thm7}, we know $G$ is traceable. Thus, it follows from Proposition~\ref{pro3} that $px_3(G)=2$.\hspace{2cm}{$\blacksquare$}

\section{The $k$-proper index}
Now, we turn to the $k$-proper index of a complete bipartite graph and a complete multipartite graph for general $k$. Throughout this section, let $k$ be a fixed integer with $k\geq 3$. Firstly, we generalize Theorem~\ref{thm1} to the $k$-proper index.

\begin{thm}\label{thm9}
If $D$ is a connected $k$-dominating set of a connected graph $G$ with minimum degree $\delta(G)\geq k$, then $px_k(G)\leq px_k(G[D])+1$.
\end{thm}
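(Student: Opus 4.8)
The plan is to mimic the structure of the proof of Theorem~\ref{thm1}, generalizing the argument from $k=3$ to arbitrary $k$. The idea is that $G[D]$ already has a good $k$-proper coloring using $px_k(G[D])$ colors, and we can extend this coloring to all of $G$ by assigning the edges between $\overline{D}$ and $D$ a single fresh color, say color $0$. Since every vertex of $\overline{D}$ has at least $k$ neighbors in $D$ (by the $k$-dominating property), any vertex outside $D$ can be attached to $D$ through edges of this new color, and the $k$-domination gives enough room to choose these attaching edges so that a proper tree can always be assembled.

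First I would set up the coloring: keep an optimal $k$-proper coloring $c_0$ of $G[D]$ on the edges inside $D$, introduce one new color $0$, and color \emph{every} edge with at least one endpoint in $\overline{D}$ with color $0$. This uses $px_k(G[D])+1$ colors in total, so it suffices to verify that the resulting coloring is $k$-proper on $G$. Let $S$ be an arbitrary $k$-subset of $V(G)$, and write $S = S_D \cup S_{\overline{D}}$ where $S_D = S\cap D$ and $S_{\overline{D}} = S\cap \overline{D}$. The strategy is to first build a proper $S'$-tree inside $G[D]$ for a suitable $k$-subset $S'$ of $D$ that ``represents'' $S$, then hang each vertex of $S_{\overline{D}}$ onto this tree by a single color-$0$ edge.

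The key steps are as follows. For each vertex $v \in S_{\overline{D}}$, I would select a neighbor $\pi(v) \in D$ to serve as its attachment point, using the $k$-domination to guarantee that each $v$ has $\ge k$ choices; the point of having $\ge k$ available neighbors is that we can make the $\pi(v)$ distinct from one another and distinct from $S_D$, and can also avoid conflicts at the attachment vertices. Concretely, I would choose a set $S' \subseteq D$ with $|S'| = k$ containing $S_D$ together with one representative $\pi(v)$ for each $v \in S_{\overline{D}}$ (padding with extra vertices of $D$ if needed, which is possible since $G[D]$ is connected and $|D|$ is large enough to support a $k$-proper coloring). Invoking that $c_0$ is $k$-proper on $G[D]$, there is a proper $S'$-tree $T'$ in $G[D]$. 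Then I form $T = T' \cup \{\,v\,\pi(v) : v \in S_{\overline{D}}\,\}$. This $T$ is connected and spans $S$; it is proper because the newly added pendant edges all carry color $0$, while their adjacent edges lie inside $G[D]$ and hence are colored from $\{1,\dots,px_k(G[D])\}$, so no color clash arises at the attachment vertices, and the pendant edges are non-adjacent to each other (distinct attachment points ensure this).

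The main obstacle I expect is the bookkeeping needed to guarantee that the attachment points $\pi(v)$ can be chosen \emph{distinct} and \emph{compatible with $T'$} — in particular, ensuring that at a vertex $\pi(v)$ which is also an internal vertex of $T'$, the single color-$0$ pendant edge does not collide with a $T'$-edge of color $0$ (there are none, since $c_0$ uses colors $1,\dots,px_k(G[D])$, so this is automatic) and that two different $v$'s do not force the same attachment vertex. This is exactly where the strengthening from $s$-domination with $s=k$ is used: each $v \in \overline{D}$ has at least $k$ neighbors in $D$, and since $|S_{\overline{D}}| \le k$, a greedy assignment (or a simple matching/Hall-type argument) lets us pick pairwise distinct representatives avoiding any small forbidden set. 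I would present this selection as the one genuinely non-routine step and verify it carefully; the rest of the verification that $T$ is a proper $S$-tree is then immediate from the color separation between the inside-$D$ palette and the fresh color $0$.
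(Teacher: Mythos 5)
Your proposal is correct and follows essentially the same route as the paper's proof: keep an optimal $k$-proper coloring on $G[D]$, give all remaining edges one fresh color, and for a $k$-set $S$ attach each vertex of $S\cap\overline{D}$ by a pendant edge to a distinct neighbor in $D$ (possible by $k$-domination), then append these pendants to a proper tree in $G[D]$ connecting the resulting vertex set. Your explicit padding of $S'$ to size exactly $k$ and the remark that distinct attachment points are what prevent two color-$0$ pendants from meeting are just slightly more careful renderings of steps the paper leaves implicit.
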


\begin{proof} Since $D$ is a connected $k$-dominating set, every vertex $v$ in $\overline{D}$ has at least $k$ neighbors in $D$.
Let $x=px_k(G[D])$.
We first color the edges in $G[D]$ with $x$ different colors from $\{2,3,\dots,x+1\}$ such that
for every $k$ vertices in $D$, there exists a proper tree in $G[D]$ connecting them.
Then we color the remaining edges with color~$1$.

Next, we will show that this coloring makes $G$ $k$-proper connected.
Let $S=\{v_1,v_2,\dots,v_k\}$ be any set of $k$ vertices in $G$.
Without loss of generality, we assume that $\{v_1,\dots,v_p\}\subseteq D$ and $\{v_{p+1},\dots,v_k\}\subseteq \overline{D}$ for some $p$ $(0\leq p\leq k)$.
For each $v_i\in \overline{D}$ $(p+1\leq i\leq k)$,
let $u_i$ be the neighbour of $v_i$ in $D$ such that $\{u_{p+1},\dots,u_k\}$ is a $(k-p)$-set.
It is possible since $D$ is a $k$-dominating set.
Then the edges $\{u_{p+1}v_{p+1},\dots, u_kv_k\}$ together with the proper tree connecting the vertices $\{v_1,\dots,v_p,u_{p+1},\dots,u_k\}$ in $G[D]$ induces a proper $S$-tree.
Thus, we have $px_k(G)\leq px_k(G[D])+1$.
\end{proof}

Based on this theorem, we can give a lower bound and a upper bound on the $k$-proper index of a complete bipartite graph, whose proof is similar to Theorem~\ref{thm2}.

\begin{thm}\label{thm10}
For a complete bipartite graph $K_{s,t}$ with $t\geq s\geq k$, we have $2 \leq px_k(K_{s,t})\leq 3$.
\end{thm}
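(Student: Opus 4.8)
Theorem~\ref{thm10} asserts that $2 \leq px_k(K_{s,t}) \leq 3$ whenever $t \geq s \geq k$.

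The lower bound is immediate: since $K_{s,t}$ is a nontrivial connected graph of order $n = s+t \geq 2k \geq 6 \geq 3$, Proposition~\ref{pro2} gives $px_k(K_{s,t}) \geq 2$ for every $k$ with $3 \leq k \leq n$.

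For the upper bound the plan is to mimic exactly the argument used for Theorem~\ref{thm2}, but invoking the generalized domination result Theorem~\ref{thm9} in place of Theorem~\ref{thm1}. Write the partite sets as $U = \{u_1,\dots,u_s\}$ and $W = \{w_1,\dots,w_t\}$. The first step is to exhibit a connected $k$-dominating set $D$ of $K_{s,t}$ whose induced subgraph $G[D]$ has $k$-proper index equal to $2$. The natural choice is $D = \{u_1,\dots,u_k\} \cup \{w_1,\dots,w_k\}$, that is, $D$ spans a $K_{k,k}$. I would verify three things: (i) $D$ is $k$-dominating, since every vertex of $W \setminus D$ is adjacent to all $k$ of $u_1,\dots,u_k \in D$, and symmetrically every vertex of $U \setminus D$ is adjacent to all $k$ of $w_1,\dots,w_k$, so each outside vertex has at least $k$ neighbours in $D$; (ii) $G[D] = K_{k,k}$ is connected, so $D$ is a \emph{connected} $k$-dominating set; and (iii) $\delta(K_{s,t}) = s \geq k$, so the hypothesis $\delta(G) \geq k$ of Theorem~\ref{thm9} holds.

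The remaining step is to confirm that $px_k(G[D]) = px_k(K_{k,k}) = 2$, so that Theorem~\ref{thm9} yields $px_k(K_{s,t}) \leq px_k(K_{k,k}) + 1 = 3$. For this I would appeal to Proposition~\ref{pro3}: the regular complete bipartite graph $K_{k,k}$ is traceable (it is well known, and also recorded in the paragraph following Proposition~\ref{pro3}, that $px_j(K_{s,s}) = 2$ for all relevant $j$), and $K_{k,k}$ has $2k \geq 6 \geq 3$ vertices, so $px_k(K_{k,k}) = 2$ for each $k$ in range. Combining the two bounds gives $2 \leq px_k(K_{s,t}) \leq 3$, as claimed.

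The only point requiring care — and the step I would treat as the main obstacle — is checking the $k$-dominating condition in its exact form: Theorem~\ref{thm9} requires that each outside vertex have at least $k$ neighbours in $D$, and moreover its proof selects, for the outside vertices of $S$, a set of \emph{distinct} representative neighbours $u_{p+1},\dots,u_k$ in $D$. With $D \cap U$ and $D \cap W$ each of size $k$, an outside vertex of $W$ has exactly $k$ candidate neighbours $u_1,\dots,u_k$ and an outside vertex of $U$ has exactly $k$ candidate neighbours $w_1,\dots,w_k$; since at most $k-1$ such distinct representatives are ever needed for a $k$-set $S$ (as $|S| = k$ but $S$ cannot consist entirely of outside vertices all drawn from the same partite side requiring $k$ representatives), the distinctness requirement can always be met. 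Verifying this compatibility with the mechanism of Theorem~\ref{thm9} is the crux; once it is in place the rest is routine, exactly paralleling Theorem~\ref{thm2}.
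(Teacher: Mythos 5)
Your proposal is correct and follows essentially the same route as the paper, which proves Theorem~\ref{thm10} only by remarking that the argument of Theorem~\ref{thm2} carries over: take the connected $k$-dominating set $D=\{u_1,\dots,u_k,w_1,\dots,w_k\}$, note $px_k(K_{k,k})=2$ by traceability (Proposition~\ref{pro3}), and apply Theorem~\ref{thm9} together with Proposition~\ref{pro2}. Your extra check of the distinct-representatives condition is a harmless refinement (and in fact all $k$ vertices of $S$ could lie outside $D$, but the representatives can still be chosen distinct since each such vertex is adjacent to all $k$ vertices of $D$ on the opposite side).
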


Let $G$ be a complete bipartite graph. Using the techniques in Theorem~\ref{thm5}, we can obtain the sufficient condition such that $px_k(G)=2$.

\begin{thm}\label{thm11}
For a complete bipartite graph $K_{s,t}$ with $t\geq s\geq 2(k-1)$, we have $px_k(K_{s,t})=2$.
\end{thm}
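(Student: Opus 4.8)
The plan is to exhibit a 2-edge-coloring and invoke Proposition~\ref{pro2} for the matching lower bound. The first reduction I would record is that when only two colors are available, every proper tree has maximum degree at most $2$: the three edges of a star meeting at a vertex are pairwise adjacent, so a properly colored star needs three colors, and a proper tree is therefore forced to be a proper path. Hence it suffices to produce a 2-edge-coloring of $K_{s,t}$ under which every $k$-subset $S$ of vertices lies on a common proper (equivalently, alternating) path.

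For the coloring I would reuse the one from Theorem~\ref{thm5}: writing $U=\{u_1,\dots,u_s\}$ and $W=\{w_1,\dots,w_t\}$, set $c(u_iw_i)=0$ and $c(u_iw_j)=1$ for $1\le i\ne j\le s$, and $c(w_ru_i)\equiv i\pmod 2$ for $s<r\le t$. The core $K_{s,s}$ on $\{u_1,\dots,u_s,w_1,\dots,w_s\}$ then carries the properly colored Hamiltonian cycle $C_s=u_1w_1u_2w_2\cdots u_sw_su_1$, so any arc of $C_s$ is an alternating path. The only vertices that resist being placed on such an arc are the ``extra'' vertices $w_r$ with $r>s$, because every edge $w_ru_i$ has color determined solely by the parity of $i$; in particular two extra vertices cannot be joined through a single $u_i$.

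The heart of the argument is then to join the extra vertices of $S$ by vertex-disjoint ``bridges''. I would check that a segment of the form $w_ru_aw_au_cw_{r'}$ with $a$ odd and $c$ even (and $a\ne c$) receives the alternating color pattern $1,0,1,0$, and that symmetric variants absorb a prescribed core vertex $w_j$ into the middle of a bridge, or route through a prescribed core vertex $u_j$ of $S$, by choosing the parities of the connecting indices accordingly. Ordering the extra vertices of $S$ as $w_{r_1},\dots,w_{r_q}$ (with $q\le k$) and stringing them together with $q-1$ such bridges yields an alternating path through all of them; each bridge consumes one odd-indexed and one even-indexed core vertex $u$, for a total of at most $2(k-1)$ distinct $u$-vertices. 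This is exactly the budget guaranteed by the hypothesis $s\ge 2(k-1)$, since then both $\lceil s/2\rceil$ and $\lfloor s/2\rfloor$ are at least $k-1$, so the separate supplies of odd- and even-indexed connectors both suffice.

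Finally I would fold the remaining core vertices of $S$ into this skeleton: core $u$-vertices of $S$ can be chosen as the connectors of the bridges, core $w$-vertices of $S$ can be placed inside a bridge as above, and if $S$ lies entirely in the core one simply reads off an arc of $C_s$. The main obstacle I anticipate is not any single step but the simultaneous bookkeeping: arranging the bridges to be vertex-disjoint, to alternate in color across every junction, to cover all three types of vertices of $S$, and to respect the separate odd/even supplies of core $u$-vertices, all within the $2(k-1)$ bound. Organizing this as a case analysis on the composition of $S$ (the numbers of core $u$-vertices, core $w$-vertices, and extra $w$-vertices), mirroring the cases in the proof of Theorem~\ref{thm5}, should keep each individual verification routine.
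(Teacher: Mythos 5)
Your coloring, your lower bound via Proposition~\ref{pro2}, and your parity count (the hypothesis $s\ge 2(k-1)$ supplies at least $k-1$ odd-indexed and $k-1$ even-indexed vertices of $U$) all match the quantitative heart of the paper's argument. But the assembly you propose contains a step that fails as stated, and the bookkeeping you defer is exactly where the difficulty lives. Consider a bridge $w_ru_xw_ju_yw_{r'}$ between two extra vertices in which the middle vertex $w_j$ is a \emph{prescribed} core vertex of $S$. The two middle edges have colors $0$ or $1$ according to whether $x=j$ and $y=j$, so alternation forces exactly one of $x,y$ to equal $j$; say $x=j$. Then the first two edges receive colors $j\bmod 2$ and $0$, so $j$ must be odd (the case $y=j$ is symmetric and gives the same conclusion). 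Hence ``absorbing a prescribed core vertex $w_j$ into the middle of a bridge'' is impossible whenever $j$ is even, and $S$ may well contain only even-indexed core $w$-vertices; patching this with longer bridges costs three $u$-vertices apiece and breaks your $2(k-1)$ budget. A similar parity obstruction afflicts ``core $u$-vertices of $S$ can be chosen as the connectors'': each bridge needs one odd and one even connector, while $S\cap U$ may have arbitrary parities, and when $q\le 1$ there are no bridges at all in which to place them.

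The paper avoids every one of these cases by refusing to build a minimal path. It starts from the full alternating path $w_{\alpha_q}u_1w_1u_2w_2\cdots u_sw_s$ (one extra vertex of $S$ hung at the front of the Hamiltonian path of the core), which already contains \emph{all} of $U$ and all of $W_1$, hence every core vertex of $S$ for free and with no analysis of the composition of $S$. The remaining $q-1$ extra vertices of $S$ are then substituted for even-indexed core vertices $w_{\beta}$ that do \emph{not} lie in $S$: replacing $u_{\beta}w_{\beta}u_{\beta+1}$ by $u_{\beta}w_{\alpha}u_{\beta+1}$ preserves the color pattern $(0,1)$ because $\beta$ is even and $\beta+1$ is odd. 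The count $|W'_S|\ge s/2-p\ge k-p-1\ge q-1$ (with $p=|S\cap W_1|$) guarantees enough substitution slots, and this single inequality is the only place the hypothesis $s\ge 2(k-1)$ is used. I recommend reorganizing your argument along these lines --- substitute extras into a spanning alternating path rather than chaining bridges and folding core vertices in afterwards --- since that makes your anticipated case analysis disappear.
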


\begin{proof}
We demonstrate a $k$-proper coloring of $K_{s,t}$ with two colors 0 and 1, the same as Theorem~\ref{thm5}. For completeness, we restate the coloring. Let $U,W$ be the two partite sets of $K_{s,t}$, where $U=\{u_1,u_2,\dots,u_s\}$ and $W=\{w_1,w_2,\dots,w_t\}$, $t\geq s\geq 2(k-1)$.
Denote a cycle $C_s=u_1w_1u_2w_2\dots u_sw_su_1$.
Let $c(u_iw_i)=0$ ($1\leq i\leq s$) and $c(u_iw_j)=1$ ($1\leq i\neq j\leq s$).
And assign $c(w_ru_i)=i\ (mod\ 2)$ ($1\leq i \leq s,s<r\leq t$).
Now, we show that for any $k$-subset $S\subseteq V(K_{s,t})$,
there is a proper path $P_S$ connecting all the vertices in $S$.
Set $W_1=\{w_1,w_2,\dots,w_s\}$ and $W_2=\{w_{s+1},\dots,w_t\}$ (if $t>s$). Then $S$ can be divided into three
parts, i.e., $S=S_1\cup S_2\cup S_3$, where $S_1=S\cap W_1$, $S_2=S\cap W_2$ and $S_3=S\cap U$.
Suppose $|S_1|=p$, $|S_2|=q$, then $p+q\leq k$.
If $q=0$, the path $P=u_1w_1u_2w_2\dots u_sw_s$ is the proper path connecting $S$. If $q\geq 1$, set $S_2=\{w_{\alpha_1,}w_{\alpha_2},\dots,w_{\alpha_q}\}$,
where $s<\alpha_1,\alpha_2,\dots,\alpha_q\leq t$. Let $P=w_{\alpha_q}u_1w_1u_2w_2\dots u_sw_s$. Then consider the vertex set $W'_S=\{w_{2i}: w_{2i}\in W_1\setminus S_1\}$.
We have $|W'_S|\geq s/2-p\geq k-p-1\geq q-1$. So set $|W'_S|=\ell$ and $W'_S=\{w_{\beta_1},w_{\beta_2},\dots,w_{\beta_{q-1}},\dots,w_{\beta_\ell}\}$,
where $2\leq \beta_1,\beta_2,\dots,\beta_\ell \leq s$ are
even.
Then we construct a path $P_S$ by replacing the subpath $u_{\beta_j}w_{\beta_j}u_{\beta_j+1}$ of $P$ with $u_{\beta_j}w_{\alpha_j}u_{\beta_j+1}$ (and $u_sw_s $ with $u_sw_{\alpha_j}$ if $\beta_j=s$) for $1\leq j\leq q-1$. Hence, the new path $P_S$ is a proper path contains all the vertices of $U$ so that $P_S$ connects $S_3$. By the replacement we know that $P_S$ also connects $S_1$ as well as $S_2$. Thus we complete the proof.
\end{proof}

With the aids of Theorems~\ref{thm11} and~\ref{thm7}, we can easily get the following, whose proof is similar to Theorem~\ref{thm8}.

\begin{thm}\label{thm12}
Let $G=K_{n_1,n_2,\dots,n_r}$ be a complete multipartite graph, where $r\geq3$ and $n_1\leq n_2\leq \dots\leq n_r$. Set $s=\sum^{r-1}_{i=1}n_i$ and $t=n_r$. If $t\geq s\geq 2(k-1)$ or $t\leq s$, then we have
$px_k(G)=2$.
\end{thm}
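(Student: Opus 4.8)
The plan is to split along the same two regimes that appear in the hypothesis, $t\ge s\ge 2(k-1)$ and $t\le s$, and in each regime reduce to a result already established in the excerpt rather than building a coloring from scratch. The key structural observation, exactly as in the proof of Theorem~\ref{thm8}, is that $G=K_{n_1,\dots,n_r}$ contains the complete bipartite graph $K_{s,t}$ as a \emph{spanning} subgraph: take one side to be $U_1\cup\dots\cup U_{r-1}$ (of size $s=\sum_{i=1}^{r-1}n_i$) and the other side to be $U_r$ (of size $t=n_r$). Every edge of this $K_{s,t}$ is present in $G$, so it is a connected spanning subgraph, and $G$ is recovered from it by adding the edges inside $U_1\cup\dots\cup U_{r-1}$.

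For the first regime, $t\ge s\ge 2(k-1)$, I would apply Theorem~\ref{thm11} to the spanning $K_{s,t}$, which gives $px_k(K_{s,t})=2$; Proposition~\ref{pro1} then yields $px_k(G)\le px_k(K_{s,t})=2$. For the matching lower bound I invoke Proposition~\ref{pro2}, after checking its hypotheses: since $r\ge 3$ and each $n_i\ge 1$ we have $n\ge 3$, and since $s\ge 2(k-1)$ with $t\ge s$ we get $n=s+t\ge 4(k-1)\ge k$, so the requirement $3\le k\le n$ is met. Hence $px_k(G)=2$ in this case.

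For the second regime, $t\le s$, I would instead show that $G$ is traceable and quote Proposition~\ref{pro3}. The minimum degree of a complete multipartite graph is attained at a vertex of the largest part, so $\delta(G)=n-n_r=n-t=s$. Because $s\ge t$, we have $n=s+t\le 2s$, hence $\delta(G)=s\ge n/2\ge (n-1)/2$. Thus the hypothesis of Theorem~\ref{thm7} is satisfied, $G$ has a Hamiltonian path, and Proposition~\ref{pro3} gives $px_k(G)=2$ (again $n\ge 3$, and $k\le n$ since $px_k(G)$ is assumed defined).

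Since the entire substance of the $t\ge s$ case has already been absorbed into Theorem~\ref{thm11} (the explicit two-coloring of $K_{s,t}$ together with its path-replacement argument), and Dirac's theorem does the work in the $t\le s$ case, there is no genuinely hard step remaining. The only points requiring care are the boundary inequalities: verifying that $\delta(G)\ge(n-1)/2$ holds precisely when $t\le s$, and confirming that the constraints $3\le k\le n$ demanded by Propositions~\ref{pro2} and~\ref{pro3} indeed follow from the stated hypotheses. These are short arithmetic checks rather than real obstacles.
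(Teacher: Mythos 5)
Your proof is correct and follows exactly the route the paper intends: the paper omits the proof of this theorem, stating only that it follows from Theorems~\ref{thm11} and~\ref{thm7} by an argument similar to that of Theorem~\ref{thm8}, which is precisely your two-regime reduction (spanning $K_{s,t}$ plus Proposition~\ref{pro1} in one case, Dirac's theorem plus Proposition~\ref{pro3} in the other). Your explicit verification of $\delta(G)=s\ge(n-1)/2$ and of the hypotheses of Propositions~\ref{pro2} and~\ref{pro3} is a welcome addition to what the paper leaves implicit.
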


\end{document}